\documentclass[a4paper,10pt]{article}
\usepackage{ae,lmodern}
\usepackage[english,greek,frenchb,french]{babel}
\usepackage[utf8]{inputenc}  
\usepackage[T1]{fontenc}
\usepackage{url,csquotes}
\usepackage[hidelinks,hyperfootnotes=false]{hyperref}
\usepackage{float}
\usepackage{amsfonts,amssymb,enumerate}
\usepackage{amsmath}
\usepackage{authblk}
\allowdisplaybreaks[1]
\usepackage{amsthm}
\usepackage{graphicx}
\usepackage{bbm}
\usepackage{listings}
\usepackage{titling}
\usepackage{dsfont}
\usepackage{color}
\usepackage{bmpsize}
\newcommand{\E}{\mathbb{E}}
    \newcommand{\Prb}{\mathbb{P}}
	
		\newcommand{\cE}{\mathcal{E}}
		\newcommand{\cF}{\mathcal{F}}
		\newcommand{\cG}{\mathcal{G}}
	
		\newcommand{\cV}{\mathcal{V}}
		\newcommand{\cH}{\mathcal{H}}
	\newcommand{\sR}{\mathbb{R}}
	\newcommand{\sN}{\mathbb{N}}
	\newcommand{\vv}{\overrightarrow{v}}
	
		\newcommand{\ff}{\overrightarrow{f}}
		\newcommand{\sS}{\mathbb{S}}
	\DeclareMathOperator{\Diam}{Diam}
		\DeclareMathOperator{\ext}{ext}
		\DeclareMathOperator{\slab}{slab}

				\DeclareMathOperator{\card}{card}
			\DeclareMathOperator{\hyp}{hyp}
				
				\DeclareMathOperator{\cyl}{cyl}
				\DeclareMathOperator{\Var}{Var}
				\DeclareMathOperator{\flow}{flow}
    \newcommand{\sZ}{\mathbb{Z}}
    \newcommand{\sC}{\mathcal{C}}
    \newcommand{\ep}{\varepsilon}

    \newcommand{\ind}{\mathds{1}}
 \theoremstyle{plain}   
 \newtheorem{thm}{Theorem}
\newtheorem{lem}[thm]{Lemma}

\newtheorem{prop}[thm]{Proposition}
\newtheorem{rk}[thm]{Remark}

\title{Size of a minimal cutset in supercritical first passage percolation \thanks{Research was partially supported by the ANR project PPPP (ANR-16-CE40-0016)}}
\date{}
\author{Barbara Dembin, Marie Théret \thanks{LPSM UMR 8001, Université Paris Diderot, Sorbonne Paris Cité, CNRS, F-75013 Paris, France}}

\begin{document}

 \selectlanguage{english}

\maketitle

\textbf{Abstract:} We consider the standard model of i.i.d. first passage percolation on $\sZ^d$ given a distribution $G$ on $[0,+\infty]$ (including $+\infty$). We suppose that $G(\{0\})>1-p_c(d)$, \textit{i.e.}, the edges of positive passage time are in the subcritical regime of percolation on $\sZ^d$. We consider a cylinder of basis an hyperrectangle of dimension $d-1$ whose sides have length $n$ and of height $h(n)$ with $h(n)$ negligible compared to $n$ (\textit{i.e.}, $h(n)/n\rightarrow 0$ when $n$ goes to infinity). We study the maximal flow from the top to the bottom of this cylinder. We already know that the maximal flow renormalized by $n^{d-1}$ converges towards the flow constant which is null in the case $G(\{0\})>1-p_c(d)$. The study of maximal flow is associated with the study of sets of edges of minimal capacity that cut the top from the bottom of the cylinder. If we denote by $\psi_n$ the minimal cardinal of such a set of edges, we prove here that $\psi_n/n^{d-1}$ converges almost surely towards a constant. \\ 

\textit{AMS 2010 subject classifications:} primary 60K35, secondary 82B20.

\textit{Keywords:} First passage percolation, maximal flow, minimal cutset, size of a cutset.
\section{Introduction}

The model of first passage percolation was first introduced by Hammersley and Welsh \cite{HammersleyWelsh} in 1965 as a model for the spread of a fluid in a porous medium. In this model, mathematicians studied intensively geodesics, \textit{i.e.},  fastest paths between two points in the grid. The study of maximal flows in first passage percolation started later in 1984 in dimension $2$ with an article of Grimmett and Kesten \cite{GrimmettKesten84}. In 1987, Kesten studied maximal flows in dimension $3$ in \cite{Kesten:flows}. The study of maximal flows is associated with the study of random cutsets that can be seen as $(d-1)$-dimensional surfaces. Their study presents more technical difficulties than the study of geodesics. Thus, the interpretation of first passage percolation in term of maximal flows has been less studied. 

Let us consider a large box in $\sZ^d$, to each edge we assign a random i.i.d. capacity with distribution $G$. We interpret this capacity as a rate of flow, \textit{i.e.}, it corresponds to the maximal amount of water that can cross the edge per second. Next, we consider two opposite sides of the box that we call top and bottom. We are interested in the maximal flow that can cross the box from its top to its bottom per second. A first issue is to understand if the maximal flow in the box properly renormalized converges when the size of the box grows to infinity. This question was addressed in \cite{Kesten:flows},  \cite{Rossignol2010} and \cite{Zhang2017} where one can find laws of large numbers and large deviation estimates for this maximal flow when the dimensions of the box grow to infinity under some moments assumptions on the capacities. The maximal flow properly renormalized converges towards the so-called flow constant.  In \cite{flowconstant}, Rossignol and Théret proved the same results without any moment assumption on $G$, they even allow the capacities to take infinite value as long as $G(\{+\infty\})<p_c(d)$ where $p_c(d)$ denotes the critical parameter of i.i.d. bond percolation on $\sZ^d$. We can interpret infinite capacities as a defect of the medium, \textit{i.e.}, there are some edges where the capacities are of bigger order. Moreover, the two authors have shown that the flow constant is continuous with regard to the distribution of the capacities. 

The flow constant is associated with the study of surfaces with minimal capacity. These surfaces must disconnect the top from the bottom of the box in a sense we will precise later. We want to know if the minimal size of these surfaces of minimal capacity grows at the same order as the size of the bottom of the box. When $G(\{0\})<1-p_c(d)$, Zhang proved in \cite{Zhang2017}, under an exponential moment condition, that there exists a constant such that the probability that all the surfaces of minimal capacity are bigger than this constant times the size of the bottom of the cylinder, decays exponentially fast when the size of the box grows to infinity. The main result of this paper is that under the assumption $G(\{0\})>1-p_c(d)$, the minimal size of a surface of minimal capacity divided by the size of the bottom of the cylinder converges towards a constant when the size of the box grows to infinity. 

The rest of the paper is organized as follows. In section \ref{s2}, we give all the necessary definitions and background, we state our main theorem and give the main ideas of the proof. In section \ref{s3}, we define an alternative flow which is more adapted for using subadditive arguments. The proof is made of three steps that correspond to sections \ref{s4}, \ref{s5} and \ref{s6}.

\section{Definition, background and main results}\label{s2}

\subsection{Definition of maximal flows and minimal cutsets}

We keep many notations used in \cite{flowconstant}. We consider the graph $(\sZ ^d,\E ^d)$ where $\E^d$ is the set of edges that link  all the nearest neighbors for the Euclidean norm in $\sZ^d$. We consider a distribution $G$ on $[0,\infty]$. To each edge $e$ in $\E^d$ we assign a random variable $t_G(e)$ with distribution $G$. The variable $t_G(e)$ is called the capacity (or the passage time) of $e$. The family $(t_G(e))_{e\in\E ^d}$ is independent.

Let $\Omega=(V_\Omega,E_\Omega)$ be a finite subgraph of $(\sZ ^d,\E ^d)$. We can see $\Omega$ as a piece of rock through which water can flow. Let $\mathfrak{G}_1$ and $\mathfrak{G}_2$ be two disjoint subsets of $V_\Omega$ representing respectively the sources through which the water can enter and the sinks through which the water exits. 

Let the function $\ff : \E^d\rightarrow \sR^d$ be a possible stream inside $\Omega$ between $\mathfrak{G}_1$ and $\mathfrak{G}_2$. For all $e\in \E^d$, $\|\ff(e)\|_2$ represents the amount of water that flows through $e$ per second and $\ff(e)/\|\ff(e)\|_2$ represents the direction in which the water flows through $e$. If we write $e=\langle x,y \rangle$ where $x,y$ are neighbors in the graph $(\sZ ^d,\E ^d)$, then the unit vector $\ff(e)/\|\ff(e)\|_2$ is either the vector $\overrightarrow{xy}$ or $\overrightarrow{yx}$. We say that our stream $\ff$ inside $\Omega$ from $\mathfrak{G}_1$ to $\mathfrak{G}_2$ is $G$-admissible if and only if it satisfies the following constraints.
\begin{itemize}
\item[$\cdot$] \textit{The node law} : for every vertex $x$ in $V_\Omega\setminus (\mathfrak{G}_1 \cup \mathfrak{G}_2)$, we have
$$\sum_{y\in\sZ^d:\,e =\langle x,y \rangle\in E_\Omega }\|\ff(e)\|_2\left(\ind_{\frac{\ff(e)}{\|\ff(e)\|_2}=\overrightarrow{xy}}-\ind_{\frac{\ff(e)}{\|\ff(e)\|_2}=\overrightarrow{yx}}\right)=0\,,$$ \textit{i.e.}, there is no loss of fluid inside $\Omega$.
\item[$\cdot$] \textit{The capacity constraint}: for every edge $e$ in $E_\Omega$, we have 
$$0\leq \|\ff(e)\|_2\leq t_G(e)\,,$$
\textit{i.e.}, the amount of water that flows through $e$ per second is limited by its capacity $t_G(e)$.
\end{itemize}

Note that as the capacities are random, the set of $G$-admissible streams inside $\Omega$ between $\mathfrak{G}_1$ and $\mathfrak{G}_2$ is also random. For each $G$-admissible stream $\ff$, we define its flow by 
$$\flow(\ff)=\sum_{x\in \mathfrak{G}_1}\sum_{y\in\sZ^d:\,e =\langle x,y \rangle\in E_\Omega }\|\ff(e)\|_2\left(\ind_{\frac{\ff(e)}{\|\ff(e)\|_2}=\overrightarrow{xy}}-\ind_{\frac{\ff(e)}{\|\ff(e)\|_2}=\overrightarrow{yx}}\right)\,.$$
This corresponds to the amount of water that enters in $\Omega$ through $\mathfrak{G}_1$ per second. By the node law, as there is no loss of fluid, $\flow(\ff)$ is also equal to the amount of water that escapes from $\Omega$ through $\mathfrak{G}_2$ per second:

$$\flow(\ff)=\sum_{x\in \mathfrak{G}_2}\sum_{y\in\sZ^d:\,e =\langle x,y \rangle\in E_\Omega }\|\ff(e)\|_2\left(\ind_{\frac{\ff(e)}{\|\ff(e)\|_2}=\overrightarrow{yx}}-\ind_{\frac{\ff(e)}{\|\ff(e)\|_2}=\overrightarrow{xy}}\right)\,.$$

The maximal flow from $\mathfrak{G}_1$ to $\mathfrak{G}_2$ in $\Omega$ for the capacities $(t_G(e))_{e\in\E^d}$, denoted by $\phi_G(\mathfrak{G}_1\rightarrow \mathfrak{G}_2\text{ in } \Omega)$, is the supremum of the flows of all admissible streams through $\Omega$:
$$\phi_G(\mathfrak{G}_1\rightarrow \mathfrak{G}_2\text{ in } \Omega)=\sup\left\{\flow(\ff)\,:\,\begin{array}{c}\ff\text{ is a $G$-admissible stream inside}\\ \text{$\Omega$ between $\mathfrak{G}_1$ and $\mathfrak{G}_2$}\end{array}\right\}\,.$$

Dealing with admissible streams is not so easy, but hopefully we can use an alternative definition of maximal flow which is more convenient. Let $E\subset E_\Omega$ be a set of edges. We say that $E$ cuts $\mathfrak{G}_1$ from $\mathfrak{G}_2$ in $\Omega$ (or is a cutset, for short) if there is no path from $\mathfrak{G}_1$ to $\mathfrak{G}_2$ in $(V_\Omega, E_\Omega \setminus E)$. More precisely, let $\gamma$ be a path from $\mathfrak{G}_1$ to $\mathfrak{G}_2$ in $\Omega$, we can write $\gamma$ as a finite sequence $(v_0,e_1,v_1,\dots,e_n,v_n)$ of vertices $(v_i)_{i=0,\dots,n}\in V_\Omega^{n+1}$ and edges $(e_i)_{i=1,\dots,n}\in E_\Omega ^n$ where $v_0\in  \mathfrak{G}_1$, $v_n\in \mathfrak{G}_2$ and for any $1\leq i \leq n$, $e_i=\langle v_{i-1}, v_i \rangle \in E_\Omega$. Then, $E$ cuts $\mathfrak{G}_1$ from $\mathfrak{G}_2$ in $\Omega$ if for any path $\gamma$ from $\mathfrak{G}_1$ to $\mathfrak{G}_2$ in $\Omega$, we have $\gamma\cap E\neq \emptyset$. Note that $\gamma$ can be seen as a set of edges or a set of vertices and we define $|\gamma|=n$.

We associate with any set of edges $E$ its capacity $T_G(E)$ defined by $T_G(E)=\sum _{e\in E} t_G(e)$. The max-flow min-cut theorem, see \cite{Bollobas}, a result of graph theory, states that 
$$\phi_G(\mathfrak{G}_1\rightarrow \mathfrak{G}_2\text{ in } \Omega)=\min\{T_G(E)\,:\, E \text{ cuts $\mathfrak{G}_1$ from $\mathfrak{G}_2$ in $\Omega$}\}\,.$$
The idea behind this theorem is quite intuitive. When we consider a maximal flow through $\Omega$, some of the edges are jammed. We say that $e$ is jammed if the amount of water that flows through $e$ is equal to the capacity $t_G(e)$. These jammed edges form a cutset, otherwise we would be able to find a path $\gamma$ from $\mathfrak{G}_1$ to $\mathfrak{G}_2$ of non-jammed edges, and we could increase the amount of water that flows through $\gamma$ which contradicts the fact that the flow is maximal. Thus, the flow is always smaller than the capacity of any cutset. It can be proved that the maximal flow is equal to the minimal capacity of a cutset. 

In \cite{Kesten:flows}, Kesten interpreted the study of maximal flow as a higher dimensional version of the classical problem of first passage percolation which is the study of geodesics. A geodesic may be considered as an object of dimension $1$, it is a path with minimal passage time. On the contrary, the maximal flow is associated (via the max-flow min-cut theorem) with cutsets of minimal capacity: those cutsets are objects of dimension $d-1$, that can be seen as surfaces. To better understand the interpretation in term of surfaces, we can associate with each edge $e$ a small plaquette $e^*$. The plaquette $e^*$ is an hypersquare of dimension $d-1$ whose sides have length one and are parallel to the edges of the graphs, such that $e^*$ is normal to $e$ and cuts it in its middle. We associate with the plaquette $e^*$ the same capacity $t_G(e)$ as with the edge $e$. We also define the dual of a set of edge $E$ by $E^*=\{e^*,\,e\in E\}$. Roughly speaking, if the set of edges $E$ cuts $\mathfrak{G}_1$ from $\mathfrak{G}_2$ in $\Omega$, the surface of plaquettes $E^*$ disconnects $\mathfrak{G}_1$ from $\mathfrak{G}_2$ in $\Omega$. Although this interpretation in terms of surfaces seems more intuitive than cutsets, it is really technical to handle, and we will never use it and not even try to give a rigorous definition of a surface of plaquettes. Note that, in dimension $2$, a surface of plaquettes is very similar to a path in the dual graph of $\sZ^2$ and thus the study of minimal cutsets is very similar to the study of geodesics. 

We now consider two specific maximal flows through a cylinder for first passage percolation on $\sZ^d$ where the law of capacities is given by a distribution $G$ such as $G([-\infty,0))=0$ and $G(\{0\})>1-p_c(d)$, \textit{i.e.}, the edges of positive capacity are in the sub-critical regime of percolation on $\sZ^d$. We are interested in the study of cutsets in a cylinder. Among all the minimal cutsets, we are interested with the ones with minimal size. 
Let us first define the maximal flow from the top to the bottom of a cylinder. Let $A$ be a non-degenerate hyperrectangle, \textit{i.e.}, a rectangle of dimension $d-1$ in $\sR^d$. We denote by $\cH^{d-1}$ the Hausdorff measure in dimension $d-1$: for $A=\prod_{i=1}^{d-1}[k_i,l_i]\times \{c\}$ with $k_i<l_i$, $c\in\sR$ we have $\cH^{d-1}(A)=\prod_{i=1}^{d-1}(l_i-k_i)$. Let $\vv$ be one of the two unit vectors normal to $A$. Let $h>0$, we denote by $\cyl(A,h)$ the cylinder of basis $A$ and height $h$ defined by 
$$\cyl(A,h)=\{x+t\vv\, : \,  x\in A,\, t\in[0,h]\}\,.$$
The dependence on $\vv$ is implicit in the notation $\cyl(A,h)$.
We have to define discretized versions of the bottom $B(A,h)$ and the top $T(A,h)$ of the cylinder $\cyl(A,h)$. We define them by 
$$B(A,h):= \left\{x\in\sZ^d\cap\cyl(A,h)\,:\,\begin{array}{c}
\exists y \notin \cyl(A,h),\, \langle x,y \rangle\in\E^d \\\text{ and $\langle x,y \rangle$ intersects } A
\end{array} \right\}$$
and
$$T(A,h):= \left\{x\in\sZ^d\cap\cyl(A,h)\,:\,\begin{array}{c}
\exists y \notin \cyl(A,h),\, \langle x,y \rangle\in\E^d \\\text{ and $\langle x,y \rangle$ intersects } A+h\vv
\end{array} \right\}\,.$$

We denote by $\Phi_G(A,h)$ the maximal flow from the top to the bottom of the cylinder $\cyl(A,h)$ in the direction $\vv$, defined by 
$$\Phi_G(A,h)=\Phi_G(T(A,h)\rightarrow B(A,h) \text{ in } \cyl(A,h))\,.$$

This definition of the flow is not well suited to use ergodic subadditive theorems, because we cannot glue two cutsets from the top to the bottom of two adjacent cylinders together to build a cutset from the top to the bottom of the union of these two cylinders. The reason is that the intersection of a cutset from the top to the bottom of a cylinder with the boundary of the cylinder is totally free. We can fix this issue by introducing another flow through the cylinder for which the subadditivity would be recover. To define this flow, we will first define another version of the cylinder which is more convenient. We denote by $\cyl'(A,h)= \{x+t\vv\, : \,  x\in A,\, t\in[-h,h]\}$. The set $\cyl'(A,h)\setminus A$ has two connected components denoted by $C_1(A,h)$ and $C_2(A,h)$. We have to define a discretized version of the boundaries of these two sets. For $i=1,\,2$, we denote by $C'_i(A,h)$ the discrete boundary of $C_i(A,h)$ defined by 
$$C'_i(A,h)=\{x\in\sZ^d\cap C_i(A,h)\,:\,\exists y\notin \cyl'(A,h),\,\langle x,y\rangle\in\E^d\}\,.$$
We call informally $C'_i(A,h)$, $i=1,2$, the upper and lower half part of the boundary of $\cyl'(A,h)$.
We denote by $\tau_G(A,h)$ the maximal flow from the upper half part to the lower half part of the boundary of the cylinder, \textit{i.e.}, 
$$\tau_G(A,h)=\phi_G(C'_1(A,h)\rightarrow C'_2(A,h)\text{ in } \cyl'(A,h))\,.$$
By the max-flow min-cut theorem, the flow $\tau_G(A,h)$ is equal to the minimal capacity of a set of edges $E$ that cuts $C'_1(A,h)$ from $C'_2(A,h)$ inside the cylinder $\cyl'(A,h)$. If we consider the dual set $E^*$ of $E$, the intersection of $E^*$ with the boundary of the cylinder has to be close to the boundary $\partial A$ of the hyperrectangle $A$. 

\begin{rk}Note that here we will work only with the cylinder $\cyl(A,h)$ whereas the authors of \cite{flowconstant} work mainly with the cylinder $\cyl'(A,h)$.
\end{rk}

\subsection{Background on maximal flows}

The simplest case to study maximal flows is for a straight cylinder, \textit{i.e.}, when $\vv=\overrightarrow{v_0}:=(0,0,\dots,1)$ and $A=A(\overrightarrow{k},\overrightarrow{l})=\prod_{i=1}^{d-1}[k_i,l_i]\times \{0\}$ with $k_i\leq 0<l_i\in\sZ$. In this case, the family of variables $(\tau_G(A(\overrightarrow{k},\overrightarrow{l}),h))_{\overrightarrow{k},\overrightarrow{l}}$ is subadditive since minimal cutsets in adjacent cylinders can be glued together  along the common side of these cylinders. By applying ergodic subadditive theorems in the multi-parameter case (see Krengel and Pyke \cite{KrengelPyke} and Smythe \cite{Smythe}), we obtain the following result.

\begin{prop}\label{prop1} Let $G$ be an integrable probability measure on $[0,+\infty[$, \textit{i.e.}, $\int_{\sR^+}xdG(x)<\infty$. Let $A=\prod_{i=1}^{d-1}[k_i,l_i]\times \{0\}$ with $k_i\leq 0<l_i\in\sZ$. Let $h:\,\sN\rightarrow \sR^+$ such that $\lim_{n\rightarrow \infty}h(n)=+\infty$. Then there exists a constant $\nu_G(\overrightarrow{v_0})$, that does not depend on $A$ and $h$ but depends on $G$ and $d$, such that 
$$\lim_{n\rightarrow \infty}\frac{\tau_G(nA,h(n))}{\cH^{d-1}(nA)}=\nu_G(\overrightarrow{v_0})\text{  a.s. and in $L^1$}.$$
\end{prop}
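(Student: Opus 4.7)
The plan is to deduce the convergence from a multi-parameter subadditive ergodic theorem (Krengel-Pyke \cite{KrengelPyke} or Smythe \cite{Smythe}) applied to the family $\{\tau_G(A(\overrightarrow{k},\overrightarrow{l}),h)\}_{(\overrightarrow{k},\overrightarrow{l})}$. The central geometric step is to verify subadditivity of $\tau_G$ with respect to the basis: if $A$ is split by a coordinate hyperplane into two adjacent sub-hyperrectangles $A_1,A_2$ and $E_i$ realizes $\tau_G(A_i,h)$, then $E_1\cup E_2$ cuts $C'_1(A,h)$ from $C'_2(A,h)$ in $\cyl'(A,h)$. The key point is that $C'_1(A_i,h)$ contains the portion of the shared face of the two sub-cylinders lying above $A$, so the sheets realized by $E_1$ and $E_2$ meet there and together separate the upper and lower halves of $\cyl'(A,h)$; a formal check proceeds by looking at the sub-path of any putative crossing path inside each $\cyl'(A_i,h)$. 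This yields
\[
\tau_G(A,h)\leq \tau_G(A_1,h)+\tau_G(A_2,h).
\]
This is precisely why $\cyl'$ was introduced: the lateral boundary of the primed cylinder anchors the cuts so they can be glued, which fails for the unconstrained flow $\Phi_G$ in $\cyl$.

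Stationarity of $\tau_G$ under horizontal integer translations of $A$ follows from the i.i.d.\ structure of the capacities, and the set of vertical edges in $\cyl'(A,h)$ crossing the hyperplane containing $A$ is a trivial cutset, giving
\[
\E[\tau_G(A,h)]\leq \cH^{d-1}(A)\int_{\sR^+}x\,dG(x)<\infty.
\]
The multi-parameter subadditive ergodic theorem then delivers, for each fixed $h$, a deterministic constant $\nu_G(\overrightarrow{v_0};h)$ such that $\tau_G(nA,h)/\cH^{d-1}(nA)\to \nu_G(\overrightarrow{v_0};h)$ almost surely and in $L^1$, the limit being a spatial density and thus independent of the shape of $A$.

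It remains to eliminate the dependence on $h$ and let $h=h(n)$ vary. A direct comparison shows that $\tau_G(A,h)$ is non-increasing in $h$: if $E$ cuts $C'_1(A,h_1)$ from $C'_2(A,h_1)$ in $\cyl'(A,h_1)$, then any path in $\cyl'(A,h_2)$ from $C'_1(A,h_2)$ to $C'_2(A,h_2)$ with $h_2>h_1$ necessarily traverses $\cyl'(A,h_1)$ and restricts there to a path between $C'_1(A,h_1)$ and $C'_2(A,h_1)$, which must meet $E$. Hence $\nu_G(\overrightarrow{v_0};h)$ is non-increasing in $h$ and we set $\nu_G(\overrightarrow{v_0}):=\lim_{h\to\infty}\nu_G(\overrightarrow{v_0};h)$. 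The upper bound $\limsup_n \tau_G(nA,h(n))/\cH^{d-1}(nA)\leq \nu_G(\overrightarrow{v_0})$ then follows by sandwiching $\tau_G(nA,h(n))\leq \tau_G(nA,h_0)$ once $h(n)\geq h_0$, sending $n\to\infty$ first and then $h_0\to\infty$. The main obstacle is the matching lower bound, since the monotonicity runs the wrong way to compare $\tau_G(nA,h(n))$ with $\tau_G(nA,h_0)$ from below; the natural remedy, which the paper defers to Section \ref{s3}, is to introduce an auxiliary flow that is subadditive in both the basis and the height and agrees with $\tau_G(\cdot,h(n))$ up to an $o(n^{d-1})$ boundary correction. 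The $L^1$ convergence will then follow from the a.s.\ convergence together with the uniform integrability provided by the dominating trivial-cutset bound.
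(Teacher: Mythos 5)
Your reconstruction follows the same skeleton the paper alludes to: subadditivity of $\tau_G$ in the basis variable (gluing cutsets in adjacent $\cyl'$-cylinders along the common lateral face), stationarity under integer translations, a trivial vertical cutset giving integrability, and a multi-parameter subadditive ergodic theorem (Smythe, Krengel--Pyke). The paper itself only asserts this in a single sentence and treats Proposition~\ref{prop1} as known background (it is essentially borrowed from \cite{Rossignol2010}); it supplies no written proof. Your verification of subadditivity, of the monotonicity of $\tau_G(A,h)$ in $h$, and of the resulting upper bound $\limsup_n \tau_G(nA,h(n))/\cH^{d-1}(nA)\le\inf_{h_0}\nu_G(\overrightarrow{v_0};h_0)$ is sound in outline and consistent with what the paper implicitly relies on.

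However, your argument stops exactly where the nontrivial work begins, and you acknowledge this yourself: the matching lower bound for a height function $h(n)\to\infty$ that may grow arbitrarily slowly, which is also what is needed to see that the constant is independent of $h$. You propose to close this by invoking ``the auxiliary flow the paper defers to Section~\ref{s3},'' but this is a misattribution. The object $\chi_G$ introduced in Section~\ref{s3} is built for the supercritical regime $G(\{0\})>1-p_c(d)$ and counts the \emph{number of edges} in null-capacity cutsets; it is neither subadditive in a way that relates to the \emph{capacity} $\tau_G$ nor relevant to the flow constant $\nu_G(\overrightarrow{v_0})$ of Proposition~\ref{prop1}, which is stated for general integrable $G$ and is already complete before Section~\ref{s3} enters. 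The genuine closing of the gap (controlling how far a near-minimal cutset can protrude vertically, so that $\tau_G(nA,h(n))$ is comparable from below to $\tau_G(nA,h_0)$ up to $o(n^{d-1})$) is carried out in \cite{Rossignol2010}, not in this paper, and is absent from your proposal. So your high-level strategy coincides with the paper's cited one, but the step you yourself flag as ``the main obstacle'' is a real unclosed gap, and the pointer to Section~\ref{s3} does not address it.
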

The constant $\nu_G(\overrightarrow{v_0})$ is called the flow constant.

Next, a natural question to ask is whether we can define a flow constant for any direction. When we consider tilted cylinders, we cannot recover perfect subadditivity because of the discretization of the boundary. Moreover, the use of ergodic subadditive theorems is not possible when the direction $\vv$ we consider is not rational, \textit{i.e.}, when there does not exist an integer $M$ such that $M\vv$ has integer coordinates. Indeed, in that case there exists no vector $\overrightarrow{u}$ normal to $\vv$ such that the model is invariant under the translation of vector $\overrightarrow{u}$. These issues were overcome by Rossignol and Théret in \cite{Rossignol2010} where they proved the following law of large numbers.

\begin{thm}
 Let $G$ be a probability measure on $[0,+\infty[$ such that $G$ is integrable, \textit{i.e.}, $\int_{\sR^+}xdG(x)<\infty$. For any $\vv\in\sS^{d-1}$, there exists a constant $\nu_G(\vv)\in[0,+\infty[$ such that for any non-degenerate hyperrectangle $A$ normal to $\vv$, for any function $h:\,\sN\rightarrow \sR^+$ such that $\lim_{n\rightarrow \infty}h(n)=+\infty$, we have
 $$\lim_{n\rightarrow \infty}\frac{\tau_G(nA,h(n))}{\cH^{d-1}(nA)}=\nu_G(\vv)\text{ in $L^1$}.$$
 If moreover the origin of the graph belongs to $A$, or if  $\int_{\sR^+}x^{1+1/(d-1)}dG(x)<\infty$, then
  $$\lim_{n\rightarrow \infty}\frac{\tau_G(nA,h(n))}{\cH^{d-1}(nA)}=\nu_G(\vv)\text{  a.s.}.$$
  If the cylinder is flat, \textit{i.e.}, if $\lim_{n\rightarrow\infty} h(n)/n=0$, then the same convergence also holds for $\phi_G(nA,h(n))$.
  Either $\nu_G(\vv)$ is null for all $\vv \in\sS^{d-1}$ or $\nu_G(\vv)>0$ for all $\vv\in\sS^{d-1}$.
\end{thm}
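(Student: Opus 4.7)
The plan is to prove the theorem in three stages: first establish $\nu_G(\vv)$ for rational directions via subadditivity, then extend to arbitrary $\vv\in\sS^{d-1}$ by approximation, and finally deduce the statement for $\phi_G$ in flat cylinders and the dichotomy. Call $\vv\in\sS^{d-1}$ rational if $M\vv\in\sZ^d$ for some $M\in\sN$; then the sublattice $\vv^\perp\cap M\sZ^d$ contains $d-1$ independent translation vectors preserving the capacity field in distribution. Minimal cutsets for $\tau_G$ in straight cylinders $\cyl'(nA,h)$ normal to $\vv$ can be glued through their common lateral face with any such translate, because the anchoring of a $\tau_G$-cutset near $\partial A$ makes adjacent cutsets compatible. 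This gives a $(d-1)$-parameter subadditive family to which the ergodic theorems of Krengel--Pyke \cite{KrengelPyke} and Smythe \cite{Smythe} apply, producing $\nu_G(\vv)$ as the a.s.\ and $L^1$ limit of $\tau_G(nA,h(n))/\cH^{d-1}(nA)$.

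For arbitrary $\vv\in\sS^{d-1}$, pick a sequence of rational directions $\vv_k\to\vv$. To obtain an upper bound for $\tau_G(nA,h(n))$, tile the bulk of $\cyl'(nA,h(n))$ by many translates of a fixed small cylinder normal to $\vv_k$, glue together their near-minimal cutsets, and close the result along the boundary of $\cyl'(nA,h(n))$ by at most $O(n^{d-2}h(n))$ boundary edges plus a boundary layer of size $O(\|\vv-\vv_k\|\,n^{d-1})$ absorbing the angular mismatch. Taking expectations yields $\limsup_{n} \E[\tau_G(nA,h(n))]/\cH^{d-1}(nA)\le \nu_G(\vv_k)+\delta_k$ with $\delta_k\to 0$ as $k\to\infty$; a symmetric lower bound shows that $\{\nu_G(\vv_k)\}$ is Cauchy, defines $\nu_G(\vv)$ as its limit, and yields the $L^1$ convergence. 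For the a.s.\ convergence one applies a bounded-differences / Efron--Stein concentration inequality to $\tau_G(nA,h(n))$ viewed as a function of the $O(n^{d-1}h(n))$ edge capacities in the cylinder; the moment assumption $\int x^{1+1/(d-1)}\,dG(x)<\infty$ makes $\var(\tau_G(nA,h(n)))$ small enough to apply Borel--Cantelli along a subsequence $n_j$ growing polynomially, and a monotonicity argument in $h$ then fills the intermediate values. When $0\in A$, one sandwiches $nA$ between nearby rational hyperrectangles of the type treated in the first stage, for which the a.s.\ limit already exists, and lets their directions tend to $\vv$; no moment condition beyond integrability is needed in that case.

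For the flat regime $h(n)/n\to 0$ the difference between $\tau_G(nA,h(n))$ and $\phi_G(nA,h(n))$ is controlled by the edges along the lateral side of the cylinder: any $\tau_G$-cutset becomes a $\phi_G$-cutset by adding the lateral face, whose total expected capacity is $O(n^{d-2}h(n))\,\E[t_G]=o(n^{d-1})$ by integrability and flatness, and a $\phi_G$-cutset in a slightly enlarged cylinder conversely restricts to a $\tau_G$-cutset. The same limit $\nu_G(\vv)$ is therefore obtained for $\phi_G$ in the flat case. The dichotomy follows from a direction-change covering argument: for any $\vv,\ww\in\sS^{d-1}$, one fits a bounded number of cylinders of direction $\ww$ (depending only on $\vv\cdot\ww$) inside a cylinder of direction $\vv$, so as to build a cutset in the latter by gluing cutsets in the former, giving $\nu_G(\vv)\le C(\vv,\ww)\nu_G(\ww)$; hence $\nu_G$ is either identically zero or everywhere positive.

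The hardest step is the irrational-direction one: without any exact translational symmetry of the lattice along $\vv^\perp$, the subadditive ergodic theorem cannot be used directly, and one must combinatorially control the boundary losses when tiling a tilted cylinder with cylinders of nearby rational direction. Keeping all the boundary corrections of strictly smaller order than $\cH^{d-1}(nA)$, and upgrading the resulting $L^1$ convergence to an almost sure one, is the technical core of the proof, and is exactly what forces the moment assumption $\int x^{1+1/(d-1)}\,dG(x)<\infty$ in the almost sure statement.
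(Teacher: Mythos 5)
This theorem is not proved in the paper: it is quoted as a background result from Rossignol and Théret \cite{Rossignol2010}, so there is no proof in the source to compare with. Assessed on its own terms, your sketch has two substantive gaps.

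First, stage one already fails for general rational $\vv$. You claim a $(d-1)$-parameter subadditive family by gluing $\tau_G$-cutsets of adjacent translates of $\cyl'(nA,h)$ ``through their common lateral face.'' This exact gluing only works for the axis-aligned direction $\vv=\vv_0$, where $\hyp(A)$ coincides with a lattice hyperplane. For a tilted rational $\vv$, the discretized boundaries of two adjacent translates of $nA$ do not meet along a common set of lattice edges; the cutsets are anchored near slightly offset discretizations and their union need not cut the larger cylinder. The paper explicitly records this obstruction: ``When we consider tilted cylinders, we cannot recover perfect subadditivity because of the discretization of the boundary.'' This is precisely why Rossignol--Théret have to handle an approximate subadditivity, controlling the defect for every $\vv$ at once, rather than first obtaining exact limits for rational directions and then passing to the limit as you propose. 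Second, your almost-sure step does not match the stated moment hypothesis. Efron--Stein applied to $\tau_G(nA,h(n))$ as a function of the $O(n^{d-1}h(n))$ edge capacities gives a variance bound of order $n^{d-1}h(n)\,\E[t_G^2]$ and therefore presupposes $\int x^2\,dG<\infty$, whereas the theorem only assumes $\int x^{1+1/(d-1)}\,dG<\infty$, which is strictly weaker than a second moment for all $d\geq 3$. A variance-plus-Chebyshev-plus-Borel--Cantelli scheme cannot run under the stated assumption; the exponent $1+1/(d-1)$, and the unconditional a.s.\ statement when $0\in A$, come from a different mechanism (blocking/truncation along dyadic scales in the subadditive structure), not from a concentration inequality.
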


In \cite{Zhang}, Zhang found a necessary and sufficient condition on $G$ under which $\nu_G(\vv)$ is positive. He proved the following result.
\begin{thm}
Let $G$ be a probability measure on $[0,+\infty[$ such that $\int_{\sR^+}xdG(x)<\infty$. Then, $\nu_G(\vv)>0$ if and only if $G(\{0\})<1-p_c(d)$.
\end{thm}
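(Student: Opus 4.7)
I pass to the percolation language: set $p := 1 - G(\{0\})$, and call an edge $e$ \emph{open} when $t_G(e) > 0$; then $(\ind_{t_G(e) > 0})_{e \in \E^d}$ is a Bernoulli bond percolation of parameter $p$, and the condition $G(\{0\}) < 1 - p_c(d)$ reads exactly $p > p_c(d)$. I treat the two implications separately.

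\emph{Sufficiency, $p > p_c(d) \Rightarrow \nu_G(\vv) > 0$.} Since $G(\{0\}) = \lim_{\delta \downarrow 0} G([0,\delta])$, pick $\delta > 0$ with $G((\delta, +\infty)) > p_c(d)$: the $\delta$-strong edges (those with $t_G(e) > \delta$) then form a supercritical Bernoulli percolation. By the Grimmett--Marstrand slab percolation theorem, for $K$ large enough the $\delta$-strong edges percolate in every slab of thickness $K$. Partitioning the base $nA$ of $\cyl(nA,h(n))$ into $\Theta(n^{d-1})$ sub-boxes of bounded size, I would extract in each sub-box a top-to-bottom crossing lying in the infinite $\delta$-strong cluster (pasting pieces together via uniqueness of the infinite cluster). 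A Menger argument then furnishes $\Theta(n^{d-1})$ edge-disjoint open paths from top to bottom of capacity at least $\delta$ each, giving the lower bound $\phi_G(nA,h(n)) \geq c \delta n^{d-1}$ with high probability, hence $\nu_G(\vv) > 0$.

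\emph{Necessity, $p \leq p_c(d) \Rightarrow \nu_G(\vv) = 0$.} In the strict subcritical case $p < p_c(d)$, the exponential decay of connection probabilities yields $\Prb(\Diam \mathcal{C}(e) > r) \leq e^{-c r}$ for every edge $e$, where $\mathcal{C}(e)$ denotes the positive-capacity cluster of $e$. I would build a small cutset by surgery: start from the horizontal mid-level slice $S$ of edges in $\cyl(nA,h(n))$, itself a cutset of cardinality $\Theta(n^{d-1})$; for every $e \in S$ whose positive-capacity cluster $\mathcal{C}(e)$ has diameter at most $h(n)/3$, remove $e$ from the cutset and add instead the zero-capacity boundary edges of $\mathcal{C}(e)$. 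The modified edge set is still a cutset (any top-to-bottom path crossing through such an $e$ must enter and exit $\mathcal{C}(e)$), so the only capacity that survives comes from edges of $S$ lying in clusters of diameter greater than $h(n)/3$, and a first-moment estimate combining exponential cluster-size decay with $\int x \, dG(x) < \infty$ gives
$$\E[\tau_G(nA, h(n))] \leq n^{d-1} \, \E\bigl[t_G(e) \ind_{\Diam \mathcal{C}(e) > h(n)/3}\bigr] = o(n^{d-1}),$$
hence $\nu_G(\vv) = 0$. The critical case $p = p_c(d)$ I would handle by approximating $G$ from below by $G_\eta = \eta \delta_0 + (1 - \eta) G$ with $G_\eta(\{0\}) > 1 - p_c(d)$ and appealing to continuity of $\nu_G(\vv)$ as a function of the distribution.

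\textbf{Main obstacle.} The delicate point is the surgery step. In $d = 2$ it is essentially immediate by planar duality (absence of an open crossing automatically produces a closed dual cutset), but for $d \geq 3$ one must check carefully that the cluster-by-cluster detours really paste together into a bona fide cutset inside the cylinder — in particular, positive-capacity clusters touching the lateral boundary of $\cyl(nA, h(n))$ require extra care, since their boundary within the cylinder is not a closed surface. Moreover, the first-moment bound above genuinely uses integrability of $G$ rather than mere subcriticality, because capacities are unbounded and one must control the joint size of $t_G(e)$ and $\mathcal{C}(e)$.
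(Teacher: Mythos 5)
This theorem is not proved in the paper: it is cited from Zhang [\cite{Zhang}], and the text offers only a heuristic paragraph in lieu of a proof, explicitly flagging the critical case $G(\{0\})=1-p_c(d)$ as ``the main difficult part of this result.'' So there is no in-paper proof to compare against; I can only assess your sketch on its own terms.

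Your treatment of the two non-critical regimes is in the right spirit. The subcritical surgery idea (detour a flat cutset edge-by-edge through the zero-capacity boundary of each small positive-capacity cluster) is sound, and you correctly flag the lateral-boundary issue. One further subtlety in your first-moment bound: the event $\{\Diam\mathcal{C}(e)>h(n)/3\}$ is not independent of $t_G(e)$ — it forces $t_G(e)>0$ — so $\E\bigl[t_G(e)\ind_{\Diam\mathcal{C}(e)>h(n)/3}\bigr]$ should be handled by conditioning on $t_G(e)$ and applying exponential decay to the cluster grown from an endpoint of $e$ \emph{without} $e$, which \emph{is} independent of $t_G(e)$. On the sufficiency side, extracting $\Theta(n^{d-1})$ edge-disjoint $\delta$-open crossings out of Grimmett--Marstrand is genuine work (a renormalization or block construction in the spirit of Kesten or Zhang), not a routine consequence of slab percolation plus Menger; this is where most of the labor of the supercritical direction actually lies, and your sketch waves at it.

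The real gap is the critical case. Approximating $G$ from below by $G_\eta=\eta\delta_0+(1-\eta)G$ and invoking continuity of $\nu$ with respect to the distribution is not self-contained: the monotonicity $\nu_{G_\eta}\le\nu_G$ only gives the trivial inequality $0\le\nu_G$, and the missing half — upper semicontinuity of $\nu$ at the critical distribution — is precisely the hard, essentially equivalent assertion. The continuity result you would cite, Rossignol--Théret [\cite{flowconstant}], already contains the positivity criterion including the critical boundary (it is restated as Theorem \ref{RosTer} in the present paper), so deferring to it is either circular or at best reduces the statement to a later and stronger theorem. Zhang's own argument for the critical case is direct and substantially more technical, and as written your proposal does not close this case.
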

Let us give an intuition of this result. If $\tau_G(nA,h(n))>0$, then there exists a path in $\cyl'(nA,h(n))$ from the upper to the lower half part of its boundary such that all its edges have positive capacity. Indeed, if there does not exist such a path, there exists a cutset of null capacity and it contradicts $\tau_G(nA,h(n))>0$. Thus, the fact that $\nu_G(\vv)>0$ is linked with the fact that the edges of positive capacity percolate, \textit{i.e.}, $G(\{0\})<1-p_c(d)$. The main difficult part of this result is to study the critical case, \textit{i.e.}, $G(\{0\})=1-p_c(d)$.

In \cite{flowconstant}, Rossignol and Théret extended the previous results without any moment condition on $G$, they even allow $G$ to have an atom in $+\infty$ as long as $G(\{+\infty\})<p_c(d)$.  They proved the following law of large numbers for the maximal flow from the top to the bottom of flat cylinders.

\begin{thm}\label{RosTer}
For any probability measure $G$ on $[0,+\infty]$ such that $G(\{+\infty\})<p_c(d)$, for any $\vv\in\sS^{d-1}$, there exists a constant $\nu_G(\vv)\in [0,+\infty[$ such that for any non-degenerate hyperrectangle $A$ normal to $\vv$, for any function $h$ such that $h(n)/ \log n \rightarrow \infty$ and $h(n)/ n \rightarrow 0$ when $n$ goes to infinity, we have
$$\lim_{n\rightarrow \infty}\frac{\phi_G(nA,h(n))}{\cH^{d-1}(nA)}=\nu_G(\vv)\text{  a.s.}.$$
Moreover, for every $\vv\in\sS^{d-1}$,
$$\nu_G(\vv)>0 \iff G(\{0\})<1-p_c(d) \,.$$
\end{thm}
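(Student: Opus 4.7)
The statement contains two logically independent parts: the almost sure convergence of $\phi_G(nA,h(n))/\cH^{d-1}(nA)$ towards some $\nu_G(\vv)$, and the characterisation $\nu_G(\vv)>0\iff G(\{0\})<1-p_c(d)$. I would handle them separately, using the integrable-distribution result of \cite{Rossignol2010} stated just above as a black box and transferring it to the general case by truncation.

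For the convergence, the natural idea is to truncate: define $t_G^K(e)=t_G(e)\wedge K$ and write $G^K$ for its distribution. Since $G^K$ is bounded and integrable, the theorem of \cite{Rossignol2010} gives $\phi_{G^K}(nA,h(n))/\cH^{d-1}(nA)\to\nu_{G^K}(\vv)$ a.s.\ for every $K$. The comparison between $\phi$ and the subadditive version $\tau$ uses flatness $h(n)/n\to 0$: the two flows differ only through the lateral boundary of the cylinder, which carries only $O(n^{d-2}h(n))=o(n^{d-1})$ edges. The key step is then to let $K\to\infty$. The hypothesis $G(\{+\infty\})<p_c(d)$ enters here: the density of edges with $t_G(e)>K$ decreases to $G(\{+\infty\})<p_c(d)$, so for $K$ large those edges form a subcritical percolation and, because $h(n)/\log n\to\infty$, no $\{t_G>K\}$-path crosses the flat cylinder with high probability. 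This should give that $(\nu_{G^K}(\vv))_K$ is Cauchy with a finite limit $\nu_G(\vv)\in[0,\infty)$, and uniform-in-$K$ bounds on $|\phi_G-\phi_{G^K}|$ to carry the almost sure convergence through the limit.

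For the characterisation of positivity, the easy direction is $G(\{0\})>1-p_c(d)\Rightarrow\nu_G(\vv)=0$. Indeed, the edges with $t_G(e)>0$ then form a subcritical percolation, so the probability that a given top vertex belongs to a cluster of radius $\geq h(n)$ decays at least like $e^{-c h(n)}$. A union bound over the $O(n^{d-1})$ top vertices, combined with the assumption $h(n)/\log n\to\infty$, shows that with probability tending to $1$ no positive-capacity path joins the top to the bottom of the cylinder; the complementary cutset then has zero capacity, forcing $\phi_G(nA,h(n))=0$ eventually and thus $\nu_G(\vv)=0$. For the converse $G(\{0\})<1-p_c(d)$, positive-capacity edges are supercritical; a supercritical-percolation estimate produces order-$n^{d-1}$ disjoint top-to-bottom paths of positive capacity, and since any cutset must intersect each of them, a Peierls-type counting gives a linear lower bound on $\phi_G$.

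The main obstacle will be the critical case $G(\{0\})=1-p_c(d)$ hidden in the equivalence, where the exponential decay of cluster radii fails and the union bound becomes too crude; this is the content of Zhang's theorem cited above and I would import it rather than redo the critical percolation analysis. A secondary technical point is the extension to irrational directions $\vv\in\sS^{d-1}$, where no lattice invariance survives and one has to replace strict subadditivity by the geometric-approximation scheme of \cite{Rossignol2010}; fortunately all the additional errors sit on the cylinder boundaries and thus stay at the negligible order $o(n^{d-1})$ under the flatness assumption $h(n)/n\to 0$.
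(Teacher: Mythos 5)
This statement is quoted from Rossignol--Théret \cite{flowconstant} as background; the present paper does not prove it, so there is no ``paper's own proof'' to compare against. I can, however, compare your sketch to the route actually taken in \cite{flowconstant} and to the structure of the present paper, whose construction of $\chi_G$ in Section~\ref{s3} is explicitly modelled on the argument of that reference.

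The part of your sketch that is genuinely at risk is the passage from the integrable truncations $G^K$ to $G$ itself. You assert that, because edges with $t_G>K$ are subcritical for $K$ large and $h(n)/\log n\to\infty$, ``no $\{t_G>K\}$-path crosses the cylinder, so $(\nu_{G^K}(\vv))_K$ is Cauchy with a finite limit.'' That implication is not automatic: $\nu_{G^K}(\vv)$ is increasing in $K$, so it has a limit in $[0,+\infty]$, and finiteness is exactly the hard point. Absence of a $\{t_G>K\}$-crossing gives you the \emph{existence} of a cutset whose edges all have capacity $\le K$, but says nothing yet about its \emph{cardinality}; without a control on the number of edges the total capacity could be of larger order than $n^{d-1}$. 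The fix, which you need to make explicit, is to use the subcriticality of $\{t_G>M\}$ to build a cutset that stays close to the hyperplane and has $O(n^{d-1})$ edges, each of capacity $\le M$; this gives $\nu_G(\vv)\le C_d M<\infty$ and, more importantly, uniform-in-$K$ control of $\phi_G-\phi_{G^K}$ needed to pass the almost sure convergence to the limit. This is precisely what the subadditive object at random height in \cite{flowconstant} (and its analogue $\chi_G$ with $H_{G,h}(A)$ here) is designed to do: one replaces $\tau$ by a flow from the thickened rectangle $\bar A$ to a hyperplane placed high enough that the cutset of small-capacity edges exists and is confined. Your bare truncation-and-limit scheme is a genuinely different route, and it may be salvageable, but as written the key boundedness step is a gap rather than a step.

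Two minor points. First, your comparison of $\phi$ and $\tau$ by counting the $O(n^{d-2}h(n))$ lateral edges is insufficient in the absence of moment assumptions, since a few heavy edges on the boundary can dominate; the argument again has to route the boundary correction through low-capacity edges. Second, importing Zhang's positivity theorem for the direction $G(\{0\})<1-p_c(d)\Rightarrow \nu_G(\vv)>0$ is not quite a black box here: the version cited in the paper assumes $\int x\,dG<\infty$, whereas the present theorem allows $G(\{+\infty\})<p_c(d)$ with no integrability, so one must first reduce to the integrable case (e.g.\ by monotonicity, replacing $G$ by $\min(G,1)$) before invoking Zhang; this reduction is easy but should be stated. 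Your handling of $G(\{0\})>1-p_c(d)\Rightarrow\nu_G(\vv)=0$ via subcriticality of positive-capacity edges and the union bound over $O(n^{d-1})$ top vertices is correct and matches the standard argument.
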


\begin{rk}
Note that if $G(\{0\})>1-p_c(d)$, then $G(\{+\infty\})<p_c(d)$ and the flow constant is well defined according to Theorem \ref{RosTer}.
\end{rk}

In \cite{Kesten:flows}, Kesten proved a result similar to Proposition \ref{prop1} for the rescaled maximal flow $\Phi_G(nA,h(n))/\cH^{d-1}(nA)$ in a straight cylinder. He worked in dimension $3$ and considered the more general case where the lengths of the sides of the cylinder go to infinity but at different speeds in every direction, under the technical assumption that $G(\{0\})$ is smaller than some small constant. He worked with dual sets, and he had to define properly the notion of surface. He had to deal with the fact that the flow $\Phi_G$ is not subadditive. His work was very technical and cannot be easily adapted to tilted cylinders because the arguments crucially depend on some symmetries of the model for straight cylinders. Zhang extended Kesten's result in higher dimensions and without any hypothesis on $G(\{0\})$ in \cite{Zhang2017}. The asymptotic behavior of maximal flows $\Phi_G(nA,h(n))$ through tilted and non-flat cylinders was studied by Cerf and Théret in \cite{CT4,CT3,CT2,CT1}. In those papers, they even considered maximal flow through more general domains than cylinders. 

The results we have gathered here concerning maximal flows are the analogues of known results for the time constant in the study of geodesics in first passage percolation (see for instance Kesten's lecture note \cite{Kesten:StFlour}). We summarize here a few of them. In this paragraph, we interpret the random variable $t_G(e)$ as the time needed to cross the edge $e$. The passage time $T_G(\gamma)$ of a path $\gamma$ corresponds to the time needed to cross all its edges, \textit{i.e.}, $T_G(\gamma)=\sum_{e\in\gamma}t_G(e)$, and a geodesic between two points $x$ and $y$ of $\sZ^d$ is a path that achieves the following infimum:
$$T_G(x,y)=\inf\{T_G(\gamma):\,\gamma\text{ is a path from $x$ to }y\}\,.$$
As the times needed to cross the edges are random, a geodesic is a random path. Under some moment conditions, for all $x\in\sZ^d$, $T_G(0,nx)/n$ converges a.s. to a time constant $\mu_G(x)$. The time constant $\mu_G$ is either identically null or can be extended by homogeneity and continuity into a norm on $\sR^d$. Kesten investigated the positivity of $\mu_G$ and obtained that $\mu_G>0$ if and only if $G(\{0\})<p_c(d)$, see Theorem 1.15 in \cite{Kesten:StFlour}. Intuitively if the edges of null passage time percolate, there exists an infinite cluster $\sC$ made of edges of null passage time. A geodesic from $0$ to $nx$ tries to reach the infinite cluster $\sC$ as fast as possible, then travels in the cluster $\sC$ at infinite speed and exits the cluster at the last moment to go to $nx$. Under some good moment assumptions, the time needed to go from $0$ to $\sC$ and from $\sC$ to $nx$ is negligible compared to $n$ . We can show in this case that $\mu_G(x)=0$.

\subsection{Background on the minimal length of a geodesic and the minimal size of a minimal cutset}

By the max-flow min-cut theorem, we know that $\Phi_G(A,h)$ is equal to the minimal capacity of cutsets that cut the top from the bottom of $\cyl(A,h)$. Among all the cutsets of minimal capacity we are interested in the ones with the minimal cardinality:
$$\psi_G(A,h,\vv):=\inf\left\{\card_e(E) : \begin{array}{c}\text{ $E$ cuts the top from the bottom of}\\\text{$\cyl(A,h)$ and $E$ has capacity $\Phi_G(A,h)$}\end{array}\right\}$$
where $\card_e(E)$ denotes the number of edges in the edge set $E$.

The study of the quantity $\psi_G(A,h,\vv)$ was initiated by Kesten in \cite{Kesten:flows} in dimension $3$ for straight boxes and distributions $G$ such that $G(\{0\})<p_0$ where $p_0$ has to be small enough. Let $k,\,l,\,,m\in\sN$, we define the straight box $B(k,l,m)=[0,k]\times[0,l]\times[0,k]$.
\begin{thm} Let $k,\,l \, ,m\in\sN$. There exists a $p_0>1/27$ such that for all distributions $G$ on $[0,+\infty)$ such that $G(\{0\})<p_0$, there exist constants $\theta$, $C_1$ and $C_2$ depending on $G$ such that for all $n\geq 0$,
$$\Prb\left[\begin{array}{c}\text{there exists a dual set $E^*$ of at least $n$ plaquettes that cuts}\\\text{the top from the bottom of the box $B(k,l,m)$, which}\\\text{contains the point $(-\frac{1}{2},-\frac{1}{2},\frac{1}{2})$ and  such that $T_G(E^*)\leq \theta n$}\end{array}\right]\leq C_1 e^{-C_2 n}\,.$$

\end{thm}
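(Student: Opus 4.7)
The plan is a Peierls-type union-bound argument in the dual lattice. Write $\mathcal{E}_k$ for the collection of dual plaquette surfaces $E^*$ consisting of exactly $k$ plaquettes that cut the top from the bottom of $B(k,l,m)$ and contain the fixed plaquette through $(-\frac{1}{2},-\frac{1}{2},\frac{1}{2})$. The bad event is contained in $\bigcup_{k \geq n}\{\exists E^*\in\mathcal{E}_k : T_G(E^*)\leq \theta k\}$, so after a union bound it suffices to show that each summand decays exponentially at a rate large enough to overwhelm the combinatorial count of surfaces.

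The crux is a combinatorial estimate $|\mathcal{E}_k| \leq c_0^k$ for some constant $c_0>0$. A plaquette surface realising a cutset is ``connected'' in the appropriate plaquette-adjacency sense (two plaquettes sharing a common lower-dimensional face), since one could otherwise drop a connected component and still separate top from bottom. Starting from the anchor plaquette, one can then greedily enumerate such surfaces, with branching at each step bounded by the coordination number of the plaquette-adjacency graph; in dimension three this yields a constant $c_0$ whose reciprocal is, up to a simple factor, the $1/27$ appearing in the statement. Making this enumeration rigorous --- formalising the notion of plaquette surface, handling boundary effects, and ruling out degenerate configurations --- is the main technical obstacle, and is essentially the bulk of Kesten's original work in \cite{Kesten:flows}.

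With the counting in hand, the capacity of any fixed $E^*\in\mathcal{E}_k$ is controlled by a large-deviations estimate. Since $G(\{0\})<p_0<1$, one can choose $\alpha>0$ with $q := G((\alpha,+\infty])>0$, and the number of plaquettes in $E^*$ with capacity strictly larger than $\alpha$ stochastically dominates a $\mathrm{Bin}(k,q)$ variable. A standard Chernoff bound then yields
\begin{equation*}
\Prb\bigl[T_G(E^*) \leq \theta k\bigr] \leq e^{-c_1(\theta)\, k},
\end{equation*}
with a rate $c_1(\theta)\to +\infty$ as $\theta\to 0$. Combining with the combinatorial bound and summing over $k\geq n$,
\begin{equation*}
\Prb[\text{bad event}] \leq \sum_{k \geq n} c_0^k\, e^{-c_1(\theta)\, k}.
\end{equation*}
Choosing $\theta$ small enough that $c_1(\theta) > \log c_0 + 1$ (possible by the divergence of $c_1$) makes the geometric series converge and delivers the claimed bound $C_1 e^{-C_2 n}$. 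The genuine difficulty thus lies entirely in Step~1: once the combinatorial inequality $|\mathcal{E}_k|\leq c_0^k$ is available, Steps~2 and 3 are routine.
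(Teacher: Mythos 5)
This theorem is quoted in the paper as a result of Kesten \cite{Kesten:flows}; it is not proved there and only serves as background for the complementary regime $G(\{0\})>1-p_c(d)$, so I assess your sketch on its own terms.

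Your overall framework --- a Peierls-type union bound over anchored connected plaquette surfaces, a counting estimate $|\mathcal{E}_k|\leq c_0^k$, and a large-deviations estimate for the capacity of a fixed surface --- is the right skeleton, and you correctly identify the counting as the hard combinatorial core of Kesten's argument. But Step~2 has a genuine error: the claim that the Chernoff rate satisfies $c_1(\theta)\to+\infty$ as $\theta\to 0$ is false. The event $\{T_G(E^*)\leq\theta k\}$ always contains $\{T_G(E^*)=0\}$, whose probability is exactly $G(\{0\})^k$, so $c_1(\theta)\leq -\log G(\{0\})<+\infty$ for every $\theta>0$ as soon as $G(\{0\})>0$. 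Concretely, with $q=G((\alpha,+\infty])$ fixed, the binomial rate for the event $\{\mathrm{Bin}(k,q)\leq(\theta/\alpha)k\}$ increases to the finite limit $-\log(1-q)$ as $\theta\to 0$, and optimising over $\alpha$ only pushes the supremum up to $-\log G(\{0\})$, still finite.

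This is fatal rather than cosmetic. If the rate really diverged, your argument would establish the theorem for every $G$ with $G(\{0\})<1$, making the hypothesis $G(\{0\})<p_0$ vacuous --- yet the theorem is certainly false when $G(\{0\})$ is close to $1$, since then zero-capacity cutsets through the anchor of any prescribed size become abundant. The correct deduction runs the other way: the rate is capped at $-\log G(\{0\})$, so the geometric series over $k\geq n$ converges only if $-\log G(\{0\})>\log c_0$, i.e. $G(\{0\})<1/c_0$. It is precisely this inequality, coupled with Kesten's sharp control on the combinatorial constant $c_0$, that yields the threshold $p_0>1/27$. Your sketch never actually uses the hypothesis that $G(\{0\})$ is small, which is the telltale sign that the argument as written cannot close.
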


 Zhang in \cite{Zhang2017} extended this result in all dimensions and for distributions $G$ such that $G(\{0\})<1-p_c(d)$ and with an exponential moment. He obtained the following result.
\begin{thm} Let $G$ be a distribution on $\sR^+$ such that $\int_0^\infty \exp(\eta x)dG(x)<\infty$ for some $\eta>0$ and $G(\{0\})<1-p_c(d)$. Let $k_1,\dots,k_{d-1}\in\sN$ and $h$ with $\log h \leq k_1\cdots k_{d-1}$. Let $A=\prod_{i=1}^{d-1}[0,k_i]\times \{0\}$. There exist constants $\beta\geq 1$ depending on $G$ and $d$, $C_1$ and $C_2$ depending on $G$, $d$ and $\beta$  such that for all $\lambda >\beta$,
$$\Prb[\psi_G(A,h,\vv)>\lambda \cH^{d-1}(A)]\leq C_1\exp(-C_2  \cH^{d-1}(A))\,.$$
\end{thm}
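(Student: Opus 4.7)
The two hypotheses, $G(\{0\})<1-p_c(d)$ and the exponential moment on $G$, serve distinct purposes: the former ensures that positive-capacity edges form a supercritical Bernoulli percolation (the sharp condition for $\nu_G(\vv)>0$ by Zhang's theorem recalled above), while the latter yields Cram\'er-type large deviations for sums of capacities. My plan is first to bound the max-flow a priori by an amount linear in $\cH^{d-1}(A)$, and then to exhibit a specific minimum-capacity cutset and bound its cardinality via a renormalization argument driven by the supercritical positive-capacity cluster.

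For the a priori bound, consider the flat cutset $F$ consisting of the $\cH^{d-1}(A)$ edges of $\E^d$ normal to $A$ whose dual plaquettes tile $A$; it separates $T(A,h)$ from $B(A,h)$ in $\cyl(A,h)$, so by max-flow min-cut $\Phi_G(A,h)\le T_G(F)$, and Cram\'er's theorem for the i.i.d.\ sum $T_G(F)$ gives, for any $\alpha>\int x\,dG(x)$,
$$\Prb\bigl[\Phi_G(A,h)>\alpha\cH^{d-1}(A)\bigr]\le c_1\exp\bigl(-c_2\cH^{d-1}(A)\bigr).$$
Next, pick a maximum flow $\ff$ and let $S$ denote the set of vertices of $\cyl(A,h)$ reachable from $T(A,h)$ in the residual graph of $\ff$; its outer edge boundary $E^*$ toward $B(A,h)$ is a minimum-capacity cutset (the constructive half of max-flow min-cut). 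Fix a threshold $\delta>0$ and split $E^*=E^*_{\ge\delta}\sqcup E^*_{<\delta}$. Since the edges of $E^*_{\ge\delta}$ are saturated by $\ff$, one has
$$\delta\cdot\card_e(E^*_{\ge\delta})\le T_G(E^*)=\Phi_G(A,h),$$
which, combined with the previous estimate, bounds $\card_e(E^*_{\ge\delta})$ by $(\alpha/\delta)\cH^{d-1}(A)$ with exponentially high probability. For $\card_e(E^*_{<\delta})$, the plan is to use a renormalization at mesoscopic scale $N$: declare a cube bad if it contains an atypical density of small-capacity edges or a pathological positive-capacity connectivity pattern, and show that for $N$ large enough the bad cubes form a highly subcritical site percolation whose presence in a thin slab around $A$ exceeds $\varepsilon\cH^{d-1}(A)$ only on an exponentially unlikely event.

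The main obstacle will be the control of $\card_e(E^*_{<\delta})$. The set $S$ depends on the capacity configuration in a non-local way, its boundary can a priori be very complex, and the number of candidate interfaces grows super-exponentially in their area, defeating any naive union bound; moreover, the hypothesis does not rule out zero-capacity edges themselves percolating when $d\ge 3$, so a direct subcritical cluster-size bound is unavailable. The resolution is to carry out the renormalization at a scale $N$ tuned so that the exponential moment on $G$ and the supercritical density of positive-capacity edges together make bad cubes highly unlikely, and then to enumerate configurations of bad cubes (rather than the interfaces themselves) via a Peierls-type contour argument in a thin slab around $A$. This coarse-graining, together with the linear bound on $\card_e(E^*_{\ge\delta})$, produces the constant $\beta\ge1$ of the statement.
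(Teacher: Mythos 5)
This theorem is not proved in the present paper: it is Zhang's result from \cite{Zhang2017}, quoted as background material, and the paper offers only a one-sentence summary of his method (a geometrical construction of a smooth cutset at a mesoscopic level, combinatorial considerations, and percolation estimates). There is therefore no detailed argument in the paper against which to line up your proposal step by step.

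That said, your plan shares the broad strategy of renormalization at a mesoscopic scale plus exponential-moment estimates, but the specific choice to work with the residual-graph cutset $E^*$ introduces a circularity that you acknowledge but do not resolve. To bound $\card_e(E^*_{<\delta})$ by a Peierls argument over bad cubes, you need to count the mesoscopic cubes that $E^*$ traverses and show that in a good cube $E^*$ picks up only $O(N^{d-1})$ small-capacity edges; but the number of cubes $E^*$ meets is itself governed by the surface area of that interface, which is exactly the quantity you are trying to bound, and $E^*$ is not constrained a priori to lie in a slab of thickness $O(1)$ cubes (while $h$ can be as large as $\exp(k_1\cdots k_{d-1})$). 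The description of Zhang's argument points to the opposite order of operations: he first \emph{constructs} a candidate cutset that is smooth by design at the mesoscopic scale, so its area is controlled from the start, and only then verifies it realizes the minimum capacity. Since $\psi_G$ is an infimum over all minimum-capacity cutsets, you are free to exhibit any favorable one, and the residual boundary is likely the wrong object to analyze because its geometry is not controlled. Your observation that $G(\{0\})<1-p_c(d)$ does not preclude zero-capacity edges from percolating when $d\ge 3$ is correct and is part of why the residual set $S$ can have a rough boundary; but as written, the proposal names the main difficulty without actually dispatching it, so the plan has a genuine gap at precisely the step the paper flags as the technical heart of Zhang's proof.
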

He proves this result by a geometrical construction of a smooth cutset at a mesoscopic level, by combinatorial considerations and by percolation estimates. His work can be extended to tilted cylinders. However, his proof relies crucially on the fact that $G(\{0\})<1-p_c(d)$ and cannot be adapted for $G(\{0\})>1-p_c(d)$.

The aim of this article is to understand the behavior of $\psi_G(A,h,\vv)$ in the supercritical case $G(\{0\})>1-p_c(d)$ (the critical case $G(\{0\})=1-p_c(d)$ is expected to be much more delicate to study).
This is the analogous problem in higher dimension of the study of the minimal length of a geodesic. We denote by $N_G(x,y)$ the minimal length of a geodesic between $x$ and $y$:
$$N_G(x,y)=\inf\{|\gamma|\,:\,\gamma \text{ is a geodesic between $x$ and $y$}\}\,.$$
The analog question is to study how does $N_G(0,nx)$, the minimal length of a geodesic between $0$ and $nx$,  grow when $n$ goes to infinity if $G(\{0\})> p_c(d)$. It is expected to grow at speed $n$. This result was first proved by Zhang and Zhang in dimension 2 in \cite{ZhangZhang}. 
 
 \begin{thm} Let $d=2$ and let $G$ be a distribution on $[0,+\infty[$ such that $G(\{0\})>1/2$. We have 
$$\lim_{n\rightarrow \infty}\frac{N_G((0,0),(0,n))}{n}=\lambda_{G(\{0\})}\text{  a.s. and in $L^ 1$}$$
 where $\lambda_{G(\{0\})}$ depends only on $G(\{0\})$. 
 \end{thm}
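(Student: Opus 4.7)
Since $p := G(\{0\}) > 1/2 = p_c(2)$, the edges $e$ with $t_G(e)=0$ form a supercritical Bernoulli bond percolation on $\sZ^2$; let $\sC$ denote its a.s.\ unique infinite cluster. The plan is to reduce the question to the asymptotic behaviour of the chemical (graph) distance inside $\sC$. Any path $\gamma$ from $(0,0)$ to $(0,n)$ has passage time at least $T_G((0,0),\sC) + T_G((0,n),\sC)$, and this lower bound is achieved by paths that reach $\sC$ as cheaply as possible, travel inside $\sC$ at zero cost, and exit to $(0,n)$. Hence a length-minimising geodesic can be taken to enter $\sC$ at some point $x_0$ near $(0,0)$ and to leave it at some $y_0$ near $(0,n)$, its length being essentially the chemical distance between $x_0$ and $y_0$.

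To make this precise, set $r(n) = (\log n)^2$. By exponential decay of the radius of finite clusters in supercritical Bernoulli percolation (Grimmett-Marstrand), almost surely for all large $n$, both $(0,0)$ and $(0,n)$ lie at graph distance at most $r(n)$ from $\sC$. Choosing $x_0, y_0 \in \sC$ with $\|x_0-(0,0)\|_\infty, \|y_0-(0,n)\|_\infty \leq r(n)$, and writing $D_{\sC}$ for the chemical distance inside $\sC$, I would establish
$$N_G((0,0),(0,n)) = \min_{x_0,y_0} D_{\sC}(x_0, y_0) + O(r(n)),$$
the minimum running over admissible entry/exit choices. The upper bound is a direct concatenation of an optimal entry path, an in-cluster geodesic, and an optimal exit path. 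For the lower bound one must argue that any length-minimising geodesic, after removing its first and last excursions of length $O(r(n))$ to and from $\sC$, traces a path from $x_0$ to $y_0$ that stays essentially in $\sC$: detours through zero-capacity edges in finite clusters are controlled by another application of the exponential tails on cluster diameters.

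The last ingredient is the Antal-Pisztora theorem on chemical distances in supercritical percolation: for every $p>p_c(2)$ and every unit vector $\vv$, $D_{\sC}(0,\lfloor n\vv\rfloor)/n$ converges a.s.\ on $\{0,\lfloor n\vv\rfloor\in\sC\}$ to a positive constant $\rho(p,\vv)$. Applied to $\vv = (0,1)$ and combined with the reduction above, this yields $N_G((0,0),(0,n))/n \to \rho(p,(0,1)) =: \lambda_{G(\{0\})}$ almost surely. The limit depends only on $G(\{0\})$ since $\sC$ is determined by the Bernoulli$(p)$ field $(\ind_{t_G(e)=0})_{e\in\E^2}$. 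The $L^1$ convergence then follows by uniform integrability, using a deterministic linear upper bound $N_G((0,0),(0,n))\leq Cn$ valid on a high-probability event together with exponential tails outside it. The main technical obstacle will be the lower half of the reduction: a minimum-passage-time path is only constrained in its total cost, so a priori it could meander through finite zero-clusters or use $\sC$ inefficiently; showing that any such path can always be pruned into a genuine $\sC$-path up to a $o(n)$ correction is the heart of the argument, and it is here that $d=2$ and the supercriticality $p>1/2$ enter essentially.
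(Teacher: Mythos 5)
Note first that the paper states this theorem as a citation of Zhang and Zhang and gives no proof of it; the remark that follows the statement does, however, sketch the same reduction you use (regularize the endpoints by projecting onto the infinite zero-cluster, then apply a subadditive argument), so your overall plan is consistent with what the authors have in mind, and $N_G(\widetilde{0},\widetilde{nx})$ is essentially the chemical distance you call $D_\sC$.

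The sketch nonetheless has genuine gaps. The opening claim that every path from $(0,0)$ to $(0,n)$ has passage time at least $T_G((0,0),\sC)+T_G((0,n),\sC)$ is false for paths disjoint from $\sC$; what is true, for $n$ large, is that $(0,0)$ and $(0,n)$ lie in distinct finite components of $\sZ^2\setminus\sC$, so every self-avoiding path between them must meet $\sC$, after which the bound follows by cutting at the first and last visits. This is the step where supercriticality enters, and it should be argued rather than asserted, since it is also what forces the equality $T_G((0,0),(0,n))=T_G((0,0),\sC)+T_G((0,n),\sC)$ and hence forces the middle segment of every geodesic to have zero passage time and stay inside $\sC$. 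The harder issue, which you flag but do not close, concerns the excursion lengths: the initial segment of a geodesic has passage time exactly $T_G((0,0),\sC)$, but nothing forces it to be short — it can wander at zero cost through a large finite zero-cluster before paying to exit — so obtaining $N_G=\min D_\sC(x_0,y_0)+O(r(n))$ is not a pruning step on a given geodesic but the construction of a competitor in the minimization over geodesics, and requires its own probabilistic estimate (and a matching lower bound controlling where $x_0,y_0$ can lie). Finally, Antal--Pisztora gives exponential tail bounds comparing $D_\sC$ to the Euclidean distance; the directional a.s.\ limit $\rho(p,\vv)$ itself comes from a subadditive ergodic theorem applied to regularized endpoints (Garet--Marchand), which is exactly the regularization the paper's remark alludes to, so the reference should be corrected and that step made explicit.
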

Zhang later extended this result to all dimensions under the condition that $G(\{0\})>p_c(d)$ in \cite{ZHANGsb}. 

\begin{rk}These works can be extended to all directions. To extend it to rational directions we can use a subadditive ergodic theorem and instead of considering the points $0$ and $nx$, it is more convenient to  consider their regularized version $\widetilde{0}$ and $\widetilde{nx}$, \textit{i.e.}, their projection on the infinite cluster of null passage time (see \cite{cerf2016}). We can show that $\lim_{n\rightarrow\infty} N_G(0,nx)/n = \lim_{n\rightarrow\infty} N_G(\widetilde{0},\widetilde{nx})/n $. By continuity, we can also extend it to irrational directions.
\end{rk}
\subsection{Main result and idea of the proof}
The main result of this paper is the following.
\begin{thm}\label{heart} Let $d\geq 3$. Let $G$ be a distribution on $[0,+\infty]$ such that $G(\{0\})>1-p_c(d)$. Let $\vv\in\sS^{d-1}$. There exists a finite constant $\zeta_{G(\{0\})}(\vv)$ such that for all function $h$ such that $h(n)/ \log n \rightarrow \infty$ and $h(n)/ n \rightarrow 0$ when $n$ goes to infinity, for all non-degenerate hyperrectangle $A$ normal to $\vv$, 
$$\lim_{n\rightarrow \infty}\frac{\psi_G(nA,h(n),\vv)}{\cH^{d-1}(nA)}=\zeta_{G(\{0\})}(\vv)\text{ a.s..}$$
The constant $\zeta_{G(\{0\})}(\vv)$ depends only the direction $\vv$, $G(\{0\})$ and $d$  and not on $A$ itself nor $h$. 
\end{thm}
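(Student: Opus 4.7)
My plan is to mirror the strategy used by Rossignol and Théret in \cite{flowconstant}: introduce an auxiliary quantity $\tilde{\psi}_G$ attached to the flow $\tau_G$ in the cylinder $\cyl'(A,h)$, for which a clean subadditive structure is available, prove convergence for this auxiliary quantity, and then transfer the convergence to $\psi_G$ in flat cylinders. This matches the three step structure announced after Section \ref{s3}.

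I would first define $\tilde{\psi}_G(A,h,\vv)$ as the minimum cardinality of an edge set $E \subset E_{\cyl'(A,h)}$ that cuts $C'_1(A,h)$ from $C'_2(A,h)$ and whose capacity equals $\tau_G(A,h)$. Because minimum cutsets for $\tau_G$ can be glued along a shared face of two adjacent cylinders $\cyl'$ (the free boundary defect that forbids gluing for $\Phi_G$ is cured by the $\cyl'$ construction), the family $\tilde{\psi}_G$ is subadditive in straight cylinders. Together with the deterministic upper bound $\tilde{\psi}_G(A,h,\vv) \le \cH^{d-1}(A) + C h$ given by the horizontal cutset lying in the hyperplane containing $A$, this puts us in the scope of the multi-parameter subadditive ergodic theorems of Krengel--Pyke and Smythe, yielding an almost sure and $L^1$ limit $\zeta_{G(\{0\})}(\overrightarrow{v_0})$ for straight flat cylinders.

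I would then extend this convergence to an arbitrary direction $\vv \in \sS^{d-1}$: for rational $\vv$ a similar subadditive argument applies, using invariance of the model under the lattice of translations normal to $\vv$; for irrational $\vv$ I would use approximation by rationals together with a continuity argument, in the spirit of \cite{Rossignol2010}. At the same time I need to check that the limit depends on $G$ only through $G(\{0\})$. In the supercritical regime the subgraph of positive-capacity edges is subcritical, so its connected components are finite a.s. with exponentially decaying sizes; a minimum-cardinality minimum-capacity cutset is therefore essentially determined by the geometry of the infinite cluster of zero-capacity edges, which is a functional of $\ind_{\{t_G(e)=0\}}$ only, and a coupling argument between two distributions sharing the same atom at $0$ concludes. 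The passage from $\tilde{\psi}_G$ to $\psi_G$ in flat cylinders is then done by closing a cutset for $\tau_G$ in $\cyl'(nA,h(n))$ along the vertical sides of $\cyl(nA,h(n))$, and conversely; the closing piece has $O(n^{d-2}h(n)) = o(n^{d-1})$ edges under the flatness hypothesis, so both rescaled quantities share the same limit.

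The main obstacle, in my view, is precisely this last comparison, because one must simultaneously preserve minimality of capacity and minimality of cardinality after boundary surgery: unlike the analogous comparison for capacities, one has to argue that an optimal cardinality cutset can be chosen to stay away from a small neighbourhood of the vertical sides of the cylinder, which should follow from the finiteness of positive-capacity clusters allowing local rerouting inside the infinite zero cluster at a negligible cost in edges. A related technical point is upgrading the $L^1$ convergence given by the subadditive ergodic theorem to an almost sure convergence along an arbitrary sequence of flat cylinders, which calls for a concentration estimate of $\tilde{\psi}_G$ around its mean, using the exponential decay of subcritical positive-capacity clusters to bound how much a minimum-cardinality cutset can shift under a single edge resampling.
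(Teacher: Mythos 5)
The central step of your plan --- that $\tilde{\psi}_G$, defined as the minimal cardinality among cutsets that achieve the minimal capacity $\tau_G$ in $\cyl'$, is subadditive --- does not hold, and the paper explicitly singles this out as the obstruction. Subadditivity of $\tau_G$ comes from the fact that gluing two minimum-capacity cutsets for two adjacent copies of $\cyl'$ produces \emph{some} cutset of the union; but that glued set is in general not a \emph{minimum}-capacity cutset of the union (its capacity is merely an upper bound for $\tau_G$ of the union). Consequently its cardinality does not dominate $\tilde{\psi}_G$ of the union, which is an infimum over the restricted class of cutsets whose capacity equals exactly $\tau_G$; the constraint of minimal capacity is not stable under gluing. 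This is precisely why the paper abandons $\tau_G$-minimizers and introduces $\chi_G$: the minimal cardinality among cutsets of \emph{null} capacity between $\overline{A}$ and the hyperplane at a \emph{random} height $H_{G,h}(A)$, chosen (using the a.s. finiteness of the clusters of positive-capacity edges) so that a null-capacity cutset is guaranteed to exist. Null capacity, unlike minimal capacity, is preserved under unions, so the glued object is automatically of minimal capacity and the covering argument goes through. Without this device, the Krengel--Pyke/Smythe theorem you invoke simply does not apply to your $\tilde{\psi}_G$.

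A second, related, problem appears in your boundary surgery when passing from $\cyl'$ to $\cyl$: the $O(n^{d-2}h(n))$ edges you add to close the cutset along the vertical sides of $\cyl(nA,h(n))$ need not have null capacity, so the resulting set need not be a minimum-capacity cutset for $\Phi_G$ and is therefore not an admissible competitor in the infimum defining $\psi_G$; the smallness of the patch in cardinality is irrelevant if it destroys capacity-minimality. The paper's corresponding step closes instead with the exterior boundaries $\partial_e\sC_{G,0}(x)$ of the positive-capacity clusters attached to vertices near the lateral boundary, which are null by construction. Your remaining two ingredients --- dependence on $G$ only through $G(\{0\})$ via coupling with the Bernoulli field $\ind_{\{t_G(e)>0\}}$, and a single-edge-resampling (Efron--Stein) concentration bound for the Bernoulli case upgraded to a.s. convergence by Borel--Cantelli --- are genuinely in the spirit of what the paper does, but they do not repair the missing subadditivity at the core of your plan. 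Also note that the paper deliberately avoids the multi-parameter subadditive ergodic theorem altogether and instead mimics only the ``easy half'' of its proof at the level of expectations, precisely so as to get a limit independent of $A$ and $h$ and to cover irrational directions in one stroke, rather than by a separate rational/approximation step as you propose.
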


In the following, if a function $h:\sN\rightarrow \sR_+$ satisfies $h(n)/ \log n \rightarrow \infty$ and $h(n)/ n \rightarrow 0$ when $n$ goes to infinity, we say that $h$ satisfies condition $(\star)$.

To prove Theorem \ref{heart}, we need to introduce an alternative flow in section \ref{s3} that is inspired from \cite{flowconstant}. There are two issues: we need to study cutsets that may be merged together into a cutset and that have null capacity.  Although the cutsets corresponding to the flow $\tau$ in adjacent cylinders may be glued together easily, these cutsets do not have null capacity in general: the union of two cutsets of minimal capacity is a cutset  but does not have minimal capacity. The flow $\tau$ is subadditive but not the minimal cardinal of the minimal corresponding cutsets. The alternative flow we build in section \ref{s3} is such that the maximal flow is always null and if we merge two adjacent cutsets for this flow it is still a cutset. The aim is to work only with cutsets of null capacity so when we merge two cutsets together the union has null capacity and is therefore of minimal capacity.

Let $\chi_G$ be the minimal cardinality of a minimal cutset for the alternative flow we will define in section \ref{s3}. 
First, we  show the convergence for the expected value of $\chi_G$, properly renormalized, by using subadditive arguments in section \ref{s4}. The proof enables us to say that the limit does not depend on $h$ nor on $A$. Next, we prove that the alternative flow we have defined is actually very similar to the flow through the cylinder. We prove in section \ref{s5} that the limit obtained in \ref{s4} is equal to the limit of the renormalized expected value of $\psi_G$. In section \ref{s6}, we use a concentration inequality on $\psi_G$ to show that this random variable is close to its expectation and thus we prove Theorem \ref{heart}.

\subsection{More notations and useful results}
 For any vertex set $\sC\subset \sZ^d$, we define its diameter by $\Diam(\sC)=\sup\{\|x-y\|_2\,:\, x,y \in \sC\}$, its cardinal $\card_v(\sC)$ by the number of vertices in $\sC$, and its exterior edge boundary $\partial_e \sC$ by
$$\partial_e\sC=\left\{\langle x,y \rangle \in \E^d\,:\,\begin{array}{c}x\in\sC,\, y\notin \sC \text{ and there exists} \\\text{a path from $y$ to infinity in $\sZ^d\setminus \sC$}
\end{array} \right\}\,.$$
The notation $\langle x,y \rangle$ corresponds to the edge of endpoints $x$ and $y$. We recall that for any edge set $E\subset \E^d$, $\card_e(E)$ denotes the number of edges in $E$. There exits a constant $c_d$ such that for any finite connected set $\sC$ of vertices, $\card_e(\partial_e\sC)\leq c_d\card_v(\sC)$. Note that when there is no ambiguity we will denote by $|E|$ the cardinal of the set $E$.
We define the exterior $\ext(E)$ of a set of edges $E$:
$$\ext(E)=\{x\in\sZ^d\,:\,\text{there exists a path from $x$ to infinity in } \E^d\setminus E\}\,.$$
We will also need the notion of $r$-neighborhood $\cV(H,r)$ of a subset $H$ of $\sR^d$ defined by 
$$\cV(H,r)=\{x\in\sR^d,\,d(x,H)<r\}$$
where the distance is the Euclidian one : $d(x,H)=\inf\{\|x-y\|_2,\,y\in H\}$ and $\|z\|_2=\sqrt{\sum_{i=1}^d z_i^2}$ for $z=(z_1,\dots,z_d)\in\sR^d$.

Let $x\in\sZ^d$, we denote by $\sC_{G,0}(x)$ the connected component of $x$ in the percolation $(\ind_{t_G(e)>0})_{e\in\E^d}$, which can be seen as an edge set and as a vertex set.

The following theorem is a classical result on percolation that enables us to control the probability that an open cluster $\sC_{G,0}(x)$ is big in the subcritical regime, \textit{i.e.}, when $\Prb[t_G(e)>0]<p_c(d)$ (see for instance Theorem (6.1) and (6,75) in \cite{grimmettt:percolation}).
\begin{thm}\label{controlcluster}
Let us assume $G(\{0\})>1-p_c(d)$. There exist two constants $\kappa_1$ and $\kappa_2$ depending only on $G(\{0\})$ such that for all $x\in\sZ^d,\,n\in\sN$,
\begin{align}
\Prb[\card_v(\sC_{G,0}(x))>n]\leq \kappa_1\exp(-\kappa_2 n)\,.
\end{align}
\end{thm}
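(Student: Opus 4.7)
The plan is to reduce the statement to an instance of exponential decay of the cluster size in subcritical Bernoulli bond percolation, and then invoke the classical theorem. By the translation invariance of the i.i.d.\ family $(t_G(e))_{e\in \E^d}$, it is enough to prove the bound for $x=0$. Introduce the Bernoulli configuration $\omega(e):=\ind_{t_G(e)>0}$ on $\E^d$; by independence of the $t_G(e)$, the variables $\omega(e)$ are i.i.d.\ Bernoulli with parameter
\[
p\;:=\;\Prb[t_G(e)>0]\;=\;1-G(\{0\}).
\]
The hypothesis $G(\{0\})>1-p_c(d)$ is exactly $p<p_c(d)$, so we are in the subcritical regime. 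Moreover, by construction $\sC_{G,0}(0)$ is precisely the open cluster of $0$ in the Bernoulli percolation $\omega$, and $\card_v(\sC_{G,0}(0))$ is its number of vertices.

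The problem thus reduces to the following standard fact: for subcritical Bernoulli bond percolation on $\sZ^d$ of parameter $p<p_c(d)$, the volume of the cluster of the origin has an exponential tail. This is exactly Theorem~(6.75) of \cite{grimmettt:percolation}, which provides constants $\kappa_1(p,d)$ and $\kappa_2(p,d)$ such that $\Prb_p[\card_v(\sC(0))>n]\leq \kappa_1 e^{-\kappa_2 n}$ for all $n\in \sN$. Setting these constants for $p=1-G(\{0\})$ (they depend only on $G(\{0\})$ and on $d$, as required) gives the theorem.

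If one were to reconstruct the cited result, the two main ingredients would be: (i) the Menshikov / Aizenman--Barsky sharpness theorem, which upgrades subcriticality ($\theta(p)=0$) to exponential decay of the \emph{radius}, namely $\Prb_p[0\leftrightarrow \partial B(n)]\leq C_1 e^{-C_2 n}$; and (ii) the passage from exponential radius decay to exponential \emph{volume} decay, carried out via the generating-function / differential-inequality argument of Aizenman--Newman reproduced around Theorem~(6.75) of \cite{grimmettt:percolation}. This last step is the only subtle point, since the naive bound ``large volume $\Rightarrow$ large radius'' only yields the stretched exponential $e^{-c n^{1/d}}$; the genuine exponential rate requires the BK/FKG-based analysis of the cluster-size generating function in the subcritical phase.

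Since the paper cites this result as a black box, the proof contribution here is simply the identification of $\sC_{G,0}(x)$ with an open cluster of a subcritical Bernoulli percolation and the invocation of \cite{grimmettt:percolation}; no new argument is needed beyond that reduction.
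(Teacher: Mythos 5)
Your proof takes exactly the same route as the paper: it identifies $\sC_{G,0}(x)$ with the open cluster of $x$ in a subcritical Bernoulli bond percolation of parameter $p=1-G(\{0\})<p_c(d)$ and invokes the classical exponential-tail estimate for cluster volume from Grimmett's book (Theorem (6.75)), which is precisely the reference the paper cites. The additional sketch of the two ingredients behind the cited theorem (sharpness of the phase transition plus the Aizenman--Newman volume-decay argument) is accurate and harmless, though not required.
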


We state here the following concentration result that will be useful in section \ref{s6} (see 2.5.4 in \cite{massart2007concentration}).
\begin{prop}[Efron's Stein inequality]\label{ESI}
Suppose that $X_1, X'_1,\dots,X_n,X'_n$ are independent random variables and that for all $i\in\{1,\dots,n\}$, $X_i$ and $X'_i$ have the same distribution. We denote by $X$ the vector $(X_1,\dots,X_n)$ and by $X^{(i)}$ the vector $(X_1,\dots,X'_i,\dots,X_n)$. 

For all functions $f:\sR ^n \rightarrow \sR$,
$$\Var(f(X))\leq \dfrac{1}{2}\sum_{i=1}^n \E\left[(f(X)-f(X^{(i)}))^2\right]=\sum_{i=1}^n \E\left[(f(X)-f(X^{(i)}))_-^2\right]$$
where for any real $t$, $t_-=\max(0,-t)$.
\end{prop}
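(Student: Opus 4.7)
The plan is to combine a Doob martingale decomposition along the filtration $\cF_i:=\sigma(X_1,\dots,X_i)$ with a symmetrization argument using the independent copies $X'_i$. Assuming (as we may) that $f(X)\in L^2$, I set $V_i:=\E[f(X)\mid \cF_i]-\E[f(X)\mid\cF_{i-1}]$. The telescoping identity $f(X)-\E[f(X)]=\sum_{i=1}^n V_i$ together with the $L^2$-orthogonality of martingale differences yields
$$\Var(f(X))=\sum_{i=1}^n \E[V_i^2].$$

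Next I would introduce $g_i:=\E[f(X)\mid X_j,\,j\neq i]$, a measurable function of all coordinates except the $i$-th. By independence of the $X_j$ and the tower property, $V_i=\E[f(X)-g_i\mid \cF_i]$, so conditional Jensen gives $\E[V_i^2]\leq \E[(f(X)-g_i)^2]$. To rewrite this in terms of the resampled vector, I would condition on every coordinate other than $X_i$: then $X_i$ and $X'_i$ are i.i.d., so both $f(X)$ and $f(X^{(i)})$ have conditional mean $g_i$ and are conditionally independent. The cross term therefore vanishes and
$$\E\bigl[(f(X)-f(X^{(i)}))^2\bigm| X_j,\,j\neq i\bigr]=2\,\E\bigl[(f(X)-g_i)^2\bigm| X_j,\,j\neq i\bigr].$$
Summing over $i$ and taking expectations delivers the first inequality $\Var(f(X))\leq \frac{1}{2}\sum_{i=1}^n \E[(f(X)-f(X^{(i)}))^2]$.

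For the equality involving the negative part, I would observe that swapping $X_i$ with $X'_i$ leaves the joint law of $(X_1,\dots,X_n,X'_1,\dots,X'_n)$ invariant, hence $(X,X^{(i)})$ and $(X^{(i)},X)$ are equidistributed and the random variable $f(X)-f(X^{(i)})$ is symmetric. Consequently $\E[(f(X)-f(X^{(i)}))_+^2]=\E[(f(X)-f(X^{(i)}))_-^2]$, and these two equal pieces add up to the full second moment, which gives the stated identity. The proof is essentially routine; the only computation requiring any attention is the cross-term cancellation in the second step, which rests entirely on the conditional exchangeability of $X_i$ and $X'_i$.
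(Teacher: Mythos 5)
The paper does not prove this proposition: it is quoted as a known concentration inequality with a pointer to Massart's lecture notes (2.5.4 in \cite{massart2007concentration}). Your argument — the Doob martingale decomposition $f(X)-\E[f(X)]=\sum_i V_i$ with $\Var(f(X))=\sum_i \E[V_i^2]$, the bound $\E[V_i^2]\le\E[(f(X)-g_i)^2]$ by conditional Jensen applied to $V_i=\E[f(X)-g_i\mid\cF_i]$, the conditional resampling identity $\E\bigl[(f(X)-f(X^{(i)}))^2\mid X_j, j\ne i\bigr]=2\,\E\bigl[(f(X)-g_i)^2\mid X_j, j\ne i\bigr]$ coming from the conditional i.i.d. pair $(X_i,X'_i)$, and finally the symmetry of $f(X)-f(X^{(i)})$ under the swap $X_i\leftrightarrow X'_i$ to pass to the negative part — is correct and is in fact the standard proof appearing in that reference, so nothing is missing.
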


\section{Definition of an alternative flow}\label{s3}
 Instead of directly considering a smallest minimal cutset for the cylinder, we are going to study a different object which is more convenient for our purpose. 

Let $\overrightarrow{v}\in\sS^{d-1}$, and let $A$ be any non-degenerate hyperrectangle normal to $\overrightarrow{v}$. We denote by $\hyp(A)$ the hyperplane spanned by $A$ defined by 
$$\hyp(A)=\{x+\overrightarrow{w}\,:\, x\in A,\,\overrightarrow{w}\cdot \vv=0\}$$
where $\cdot$ denotes the usual scalar product on $\sR^d$.
For any $h>0$, we denote by $\slab(A,h,\overrightarrow{v})$ (resp. $\slab(A,\infty,\vv)$) the slab of basis the hyperplane spanned by $A$ and of height $h$ (resp. of infinite height), \textit{i.e.}, the subset of $\sR^d$ defined by 
$$\slab(A,h,\overrightarrow{v})=\{x+r\overrightarrow{v} : x\in\hyp(A),r\in[0,h]\} $$
(resp. $\slab(A,\infty,\overrightarrow{v})=\{x+r\overrightarrow{v} : x\in\hyp(A),\, r\geq 0\}\, $).
We are going to consider a thicker version of $A$, namely $\cyl(A,d)$, that we will denote by $\bar{A}$ for short.
Let $W(A,h,\vv)$ be the following set of vertices in $\sZ^d$, which is a discretized version of $\hyp (A+h\vv)$: 
$$W(A,h,\vv):=\left\{\begin{array}{c}x\in\sZ^d\cap \slab(A,h,\overrightarrow{v}):\\
\exists y \in \sZ^d\cap (\slab(A,\infty,\overrightarrow{v})\setminus \slab(A,h,\overrightarrow{v})),\langle x,y \rangle \in \E^d
\end{array} \right\}\, .$$

We say that a path $\gamma=(x_0,e_1,x_1,\dots,e_n,x_n)$ goes from $\bar{A}$ to $\hyp (A+h\vv)$ in $\slab(A,h,\overrightarrow{v})$ if :
\begin{itemize}
\item $\forall i \in \{0,\dots,n\}, x_i\in \slab(A,h,\overrightarrow{v})$
\item $x_0\in \bar{A}$
\item $x_n \in W(A,h,\vv)$.
\end{itemize}

 \begin{figure}[H]
\def\svgwidth{1.1\textwidth}
\begin{center}
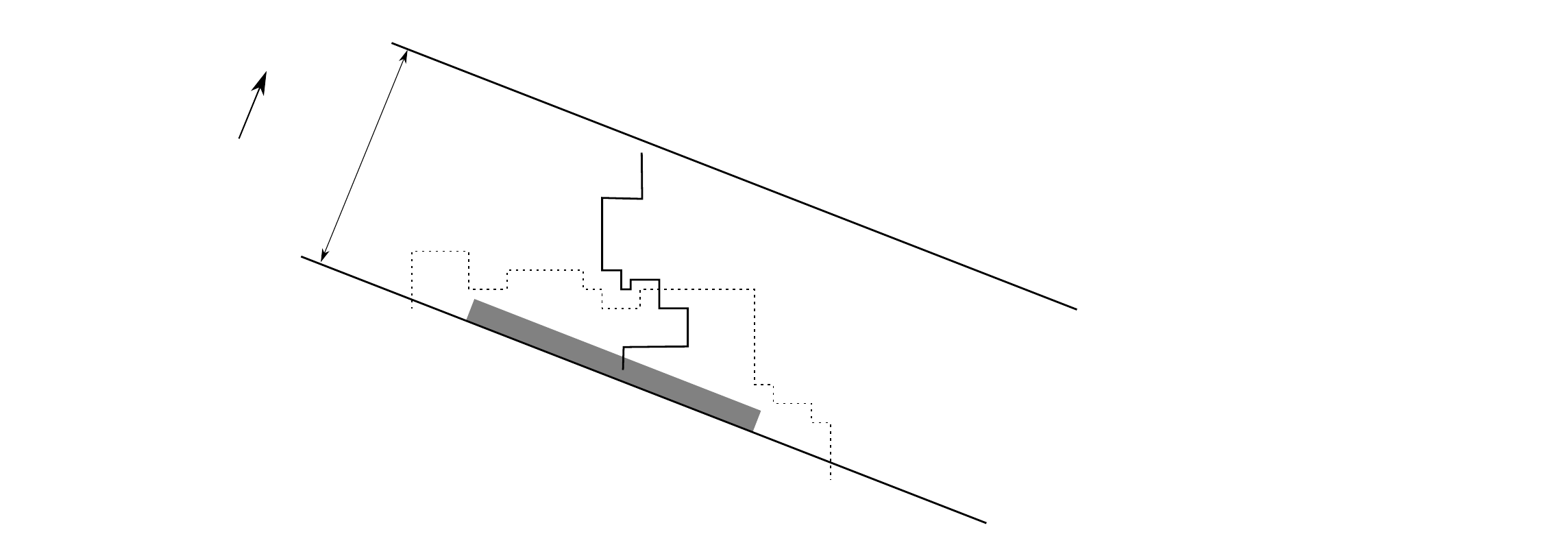
\caption[fig1]{\label{fig1}Dual of a set of edges that cuts $\overline{A}$ from $\hyp(A+h\vv)$ in $\slab(A,h,\vv)$.}
\end{center}
\end{figure}
We say that a set of edges $E$ cuts $\bar{A}$ from $\hyp (A+h\vv)$ in  $\slab(A,h,\overrightarrow{v})$ if $E$ contains at least one edge of any path $\gamma$ that goes from $\bar{A}$ to $\hyp(A+h\vv)$ in  $\slab(A,h,\overrightarrow{v})$, see Figure \ref{fig1}.

If all the clusters $\sC_{G,0}(x)$ for $x\in \bar{A}$ have a diameter less than $h/2$, then there exists a set of edges that cuts $\bar{A}$ from $\hyp(A+h\vv)$ in $\slab(A,h,\overrightarrow{v})$ of capacity $0$ (take for instance the intersection of the set $\bigcup_{x\in \bar{A}\cap\sZ^d}\partial_e  \sC_{G,0}(x)$ with $\slab(A,h,\vv)$). Working with cutsets of null capacity is interesting because the union of two cutsets of null capacity is of null capacity and therefore achieve the minimal capacity among all cutsets. This is not the case if one of them has positive capacity. Thus instead of considering a deterministic $h$, we are going to consider a random height $H_{G,h}(A)$ as 
$$H_{G,h}(A)=\inf\left\{t\geq h : \left(\bigcup_{x\in\cyl(A,h/2)\cap \sZ^d}\sC_{G,0}(x)\right)\cap W(A,t,\vv)=\emptyset\right\} \,.$$
The definition of $H_{G,h}(A)$ ensures the existence of a null cutset between $\bar{A}$ and $\hyp (A+H_{G,h}(A)\vv)$ for $h\geq 2d$.
\begin{lem}\label{lem1}Let $G$ be a distribution on $[0,+\infty]$ such that $G(\{0\})>1-p_c(d)$. Let $\vv \in \sS^{d-1}$. Let $A$ be a non-degenerate hyperrectangle normal to $\vv$ and $h>2d$ a positive real number.
The set $E=\bigcup_{x\in \bar{A}\cap \sZ^d} \partial_e\sC_{G,0}(x)$ cuts $\bar{A}$ from $\hyp (A+H_{G,h}(A)\vv)$ in $\slab(A,H_{G,h}(A),\vv)$ and has null capacity.
\end{lem}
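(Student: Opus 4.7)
The plan is to treat the two conclusions of the lemma separately, since they rest on quite different ideas.

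The null capacity is immediate from the definition of the exterior edge boundary. Every edge $\langle y,z\rangle$ in $\partial_e \sC_{G,0}(x)$ has $y\in \sC_{G,0}(x)$ and $z\notin \sC_{G,0}(x)$; if one had $t_G(\langle y,z\rangle)>0$, then $z$ would be connected to $y$ by a positive edge and thus would belong to $\sC_{G,0}(x)$, a contradiction. Summing over $e\in E$ gives $T_G(E)=0$.

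For the cutset property I would fix an arbitrary path $\gamma=(x_0,e_1,x_1,\dots,e_n,x_n)$ from $\bar A$ to $\hyp(A+H\vv)$ inside $\slab(A,H,\vv)$, writing $H=H_{G,h}(A)$ and $\sC=\sC_{G,0}(x_0)$. Since $h\geq 2d$ we have $\bar A=\cyl(A,d)\subseteq \cyl(A,h/2)$, so the very definition of $H_{G,h}(A)$ yields $\sC\cap W(A,H,\vv)=\emptyset$. The key geometric input is that $\sC$ contains no vertex strictly above $\hyp(A+H\vv)$: indeed, an open path in $\sC$ from $x_0$ to such a vertex would have to cross this hyperplane, and since neighbours in $\sZ^d$ have $\vv$-heights differing by at most $1$, the last vertex of the crossing path still lying in $\slab(A,H,\vv)$ would belong to $W(A,H,\vv)\cap \sC$, contradicting the previous sentence.

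Granted this, it remains to exhibit an edge of $\gamma$ in $\partial_e \sC\subseteq E$. Let $k$ be the largest index with $x_k\in \sC$; it exists because $x_0\in \sC$, and $k<n$ because $x_n\in W\subseteq \sZ^d\setminus \sC$. The edge $e_{k+1}$ has $x_k\in \sC$ and $x_{k+1}\notin \sC$, so I only need to exhibit an infinite path from $x_{k+1}$ inside $\sZ^d\setminus \sC$ to certify $e_{k+1}\in \partial_e \sC$. I would concatenate three pieces: first the tail $x_{k+1},\dots,x_n$ of $\gamma$, which avoids $\sC$ by maximality of $k$; next the edge from $x_n$ to the neighbour $y_0\in \slab(A,\infty,\vv)\setminus \slab(A,H,\vv)$ granted by $x_n\in W$, with $y_0\notin \sC$ by the geometric step above; and finally an infinite sequence of unit steps in a coordinate direction $\vec e_i$ with $\vec e_i\cdot \vv>0$ (which exists since $\|\vv\|_2=1$), whose vertices all have $\vv$-height strictly greater than $H$ and therefore avoid $\sC$. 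The only step that is not pure bookkeeping of indices along $\gamma$ is the geometric control on the upward extent of $\sC$, which is precisely what the definition of the random height $H_{G,h}(A)$ is engineered to deliver.
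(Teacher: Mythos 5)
Your proof is correct and takes essentially the same route as the paper: both use the disjointness $W(A,H_{G,h}(A),\vv)\cap\bigcup_{z\in\bar{A}\cap\sZ^d}\sC_{G,0}(z)=\emptyset$, coming from the definition of the random height, to force the last edge along which $\gamma$ exits the cluster of its starting point into the exterior edge boundary, hence into $E$. Where the paper tersely asserts $y\in\ext(\partial_e\sC_{G,0}(x))$ and invokes the resulting crossing, you make the geometry explicit by bounding the $\vv$-height of the cluster and then constructing the escape path to infinity that certifies $e_{k+1}\in\partial_e\sC_{G,0}(x_0)$; this is a careful unpacking of the same idea rather than a different argument.
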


\begin{proof}
Let $\vv \in \sS^{d-1}$, let $A$ be a non-degenerate hyperrectangle and $h>2d$.
Let $\gamma$ be a path from $x\in \bar{A}$ to $y\in W(A,H_{G,h}(A),\vv)$ in $\slab(A,H_{G,h}(A),\vv)$. By definition of $H_{G,h}(A)$, we have $W(A,H_{G,h}(A),\vv)\cap \left(\cup_{z\in \bar{A}} \sC_{G,0}(z)\right)=\emptyset$, thus  $y\in\ext(\partial_e\sC_{G,0}(x))$ and $\gamma$ must contain an edge in $\partial_e\sC_{G,0}(x)$. We conclude that $E$ is indeed a cutset between $\bar{A}$ and $\hyp (A+H_{G,h}(A)\vv)$ in $\slab(A,H_{G,h}(A),\vv)$. As all edges in the exterior edge boundary of a $\sC_{G,0}(x)$ have null capacity, the set $E$ is a cutset of null capacity.
\end{proof}

\begin{rk}
This definition of $H_{G,h}(A)$ may seem complicated, but the idea behind is simple. The aim was initially to build a random height $H_{G,h}(A)$ such that the minimal cutset between $A$ and $\hyp (A+H_{G,h}(A)\vv)$ has null capacity. This idea finds its inspiration from the construction of the subadditive object in section 4 in \cite{flowconstant}. However, because of technical issues that appear in the section \ref{s4}, we could not choose $H_{G,h}(A)$ as the smallest height such that there exists a cutset of null capacity between $A$ and $\hyp (A+H_{G,h}(A)\vv)$. The definition of $H_{G,h}(A)$ needs to also depend on the finite clusters $\sC_{G,0}(z)$ of $z\in\cyl(A,h/2)$. 
\end{rk}

For the rest of this section, we will work with cutsets of null capacity and we do not need to check if cutsets have minimal capacity. Among all the cutsets that achieve the minimum capacity, we are interested in the ones with the smallest size. We denote by $\chi_G(A,h,\vv)$ the following quantity :
\begin{align} \label{defchi}
\chi_G(A,h,\vv) = \inf \left\{\card_e(E) : \begin{array}{c}
\text{ $E$ cuts $\bar{A}$ from  $\hyp (A+H_{G,h}(A)\vv)$ in} \\\text{$\slab(A,H_{G,h}(A),\vv)$ and $T_G(E)=0$} \end{array} \right\}\,.
\end{align}
\begin{rk}
Because of another technical difficulty that appears in section \ref{s4} we choose to make appear $\bar{A}$ instead of $A$ in the definition of $\chi_G(A,h,\vv)$. We need the cutset not to be to close from $A$ in the proof of Proposition \ref{subb}, taking $\bar{A}$ instead of $A$ prevents this situation from happening.
\end{rk}
As a corollary of Lemma \ref{lem1}, we know that $\chi_G(A,h,\vv)$ is finite and we have the following control
\begin{align}\label{control}
\chi_G(A,h,\vv)&\leq \card_e(E)\nonumber\\
&\leq\sum_{x\in \bar{A}\cap \sZ^d} \card_e(\partial_e\sC_{G,0}(x))\nonumber\\
&\leq\sum_{x\in \bar{A}\cap \sZ^d} c_d\card_v(\sC_{G,0}(x))\,.
\end{align}
Thanks to Theorem \ref{controlcluster}, as $G(\{0\})>1-p_c(d)$, almost surely for all $x\in\sZ^d$, the cluster $\sC_{G,0}(x)$ is finite thus $\chi_G(A,h,\vv)\leq \sum_{x\in \bar{A}\cap \sZ^d} c_d\card_v(\sC_{G,0}(x))<\infty$ a.s..

We expect $\chi_G(A,h,\vv)$ to grow at order $\cH^{d-1}(A)$ when the side lengths of $A$ go to infinity. We want first to show that $\lim_{n\rightarrow \infty}\E[\chi_G(nA,h(n),\vv)]/\cH^{d-1}(nA)$ exists, is finite and does not depend on $A$ but only on $\vv$ and $G(\{0\})$.

\section{Subadditive argument}\label{s4}

In this section, we show the convergence of
$\E[\chi_G(nA,h(n),\vv)]/\cH^{d-1}(nA)$, see Proposition \ref{subb} below. This proof relies on subadditive arguments. However, we do not use a subadditive ergodic theorem for two reasons: we want to study this convergence for all directions (included irrational ones) and all hyperrectangles, and we aim to show that the limit does not depend on the hyperrectangle $A$ nor on the height function $h$.
 
\begin{prop}\label{subb} Let $G$ be a distribution on $[0,+\infty]$ such that $G(\{0\})>1-p_c(d)$. For every function $h$ satisfying condition $(\star)$, for every $\vv\in\sS^{d-1}$, for every non-degenerate hyperrectangle $A$ normal to $\vv$, the limit 
$$\zeta_{G(\{0\})}(\vv):=\lim_{n\rightarrow \infty}\frac{\E[\chi_G(nA,h(n),\vv)]}{\cH^{d-1}(nA)}$$
exists and is finite. It depends only the direction $\vv$, on $G(\{0\})$ and on $d$ but not on $A$ itself nor $h$. 
\end{prop}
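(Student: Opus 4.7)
My strategy is to establish a deterministic subadditivity of $\chi_G$ under adjacent splittings of hyperrectangles in the hyperplane normal to $\vv$, then convert this into convergence of the normalized expectations via a multiparameter Fekete-type argument, and finally remove the dependencies on $h$ and on the shape of $A$.

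\textbf{Subadditivity.} Let $A_1$ and $A_2$ be two non-overlapping hyperrectangles lying in a common hyperplane normal to $\vv$, with $A=A_1\cup A_2$ again a hyperrectangle. Since $\slab(\cdot,t,\vv)$ and $W(\cdot,t,\vv)$ depend only on the hyperplane $\hyp(A)=\hyp(A_i)$, and since $\cyl(A,h/2)=\cyl(A_1,h/2)\cup\cyl(A_2,h/2)$, the definition of $H_{G,h}$ directly yields
$$H_{G,h}(A)\;=\;\max\bigl(H_{G,h}(A_1),\,H_{G,h}(A_2)\bigr).$$
Write $H=H_{G,h}(A)$ and $H_i=H_{G,h}(A_i)$, and pick optimal null-capacity cutsets $E_i$ realizing $\chi_G(A_i,h,\vv)$. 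I claim $E_1\cup E_2$ is itself a null-capacity cutset for $(A,h,\vv)$. Indeed, the starting vertex of any path $\gamma$ from $\overline{A}=\overline{A_1}\cup\overline{A_2}$ to $W(A,H,\vv)$ in $\slab(A,H,\vv)$ belongs to some $\overline{A_i}$; using that nearest-neighbor edges change $\vv$-coordinates by at most one, a short case analysis shows that either $\gamma$ already ends in $W(A_i,H_i,\vv)$ (every vertex of $W(A,H,\vv)$ whose $\vv$-coordinate is $\leq H_i$ automatically lies in $W(A_i,H_i,\vv)$), or $\gamma$ first exits $\slab(A,H_i,\vv)$ at a vertex of $W(A_i,H_i,\vv)$; in both cases $E_i$ cuts $\gamma$. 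This gives the deterministic inequality
$$\chi_G(A,h,\vv)\;\leq\;\chi_G(A_1,h,\vv)+\chi_G(A_2,h,\vv),$$
and \textit{a fortiori} the corresponding inequality in expectation.

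\textbf{Uniform bound and convergence.} Equation \eqref{control} combined with Theorem \ref{controlcluster} gives
$$\E[\chi_G(A,h,\vv)]\;\leq\;c_d\sum_{x\in\overline{A}\cap\sZ^d}\E[\card_v(\sC_{G,0}(x))]\;\leq\;C\,\cH^{d-1}(A),$$
uniformly in $h\geq 2d$, for some constant $C=C(d,G(\{0\}))$. When $\vv$ is a coordinate direction and $A$ is a lattice-aligned rectangle with rational side lengths, the subadditivity above combined with translation invariance of the i.i.d.\ environment puts the function $\E[\chi_G(\cdot,h,\vv)]$ in the scope of a multiparameter subadditive sequence. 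The expectation form of the Krengel--Pyke/Smythe theorem then yields convergence $\E[\chi_G(nA,h,\vv)]/\cH^{d-1}(nA)\to\zeta(A,\vv,h)\in[0,C]$. A tiling argument — partition $nA$ into copies of a fixed reference rectangle $A_0$ up to a $(d-2)$-dimensional boundary whose cost is $o(\cH^{d-1}(nA))$ by the uniform bound — shows that $\zeta$ does not depend on $A$ among axis-aligned rectangles.

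\textbf{Independence of $h$; general $\vv$ and $A$; main obstacle.} To pass from a fixed $h$ to a function $h(n)$ satisfying $(\star)$, I compare the optimal cutset for one height with the candidate obtained by gluing on the natural cutset $\bigcup_{x\in\overline{nA}\cap\sZ^d}\partial_e\sC_{G,0}(x)$ restricted to the random band between the two heights; by Theorem \ref{controlcluster} and the condition $h(n)/n\to 0$, the expected correction is $o(\cH^{d-1}(nA))$, so the limit is independent of $h$. For a general $\vv\in\sS^{d-1}$ and a general hyperrectangle $A$ normal to $\vv$, the subadditivity above still applies inside $\hyp(A)$ but exact lattice translation invariance is lost; I would follow the quasi-tiling and monotone-approximation scheme of \cite{flowconstant}, whose boundary errors are absorbed by the uniform bound. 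Comparing any two hyperrectangles $A,A'$ normal to the same $\vv$ via such approximate tilings then identifies the limit as depending only on $\vv$, $G(\{0\})$ and $d$. I expect the main obstacle to be this final step: for non-lattice directions, the random heights $H_{G,h}(A_i)$ of the individual tiles fluctuate and quasi-tilings leave unavoidable boundary regions, and one must verify that all these contributions remain $o(\cH^{d-1}(nA))$ — which is ultimately made possible by the exponential decay of cluster sizes in Theorem \ref{controlcluster} together with the flatness condition $(\star)$.
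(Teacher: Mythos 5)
Your exact-subadditivity observation is a genuinely nice structural point that the paper does not make: the identity $H_{G,h}(A)=\max(H_{G,h}(A_1),H_{G,h}(A_2))$ does hold (because for a connected cluster containing a vertex of low $\vv$-coordinate the set $\{t\geq h:\ \text{cluster misses }W(A,t,\vv)\}$ is upward closed, and the cluster union splits), and your argument that $E_1\cup E_2$ cuts the merged slab works once you argue that the initial segment of $\gamma$, up to the first time its $\vv$-coordinate exceeds $H_i$, is a path in $\slab(A_i,H_i,\vv)$ from $\overline{A_i}$ to $W(A_i,H_i,\vv)$. The paper never claims exact subadditivity; it works with an \emph{approximate} bound in expectation, plus explicit error terms that vanish. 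So your opening step is a real improvement in elegance for the restricted setting where it applies.

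The gap is that this restricted setting is essentially the easy part of Proposition \ref{subb}. The Krengel--Pyke/Smythe argument you invoke only covers lattice-aligned hyperrectangles in a coordinate direction with a \emph{fixed} height $h$ (and fixed $h$ does not satisfy $(\star)$). The three extensions you then sketch — independence of $h$ and passage to $h=h(n)$ with $(\star)$, irrational $\vv$, and arbitrary hyperrectangles $A$ — are not carried out; you refer to the quasi-tiling scheme of \cite{flowconstant} and explicitly acknowledge that you ``expect the main obstacle to be this final step.'' That final step is precisely the content of the paper's proof: it avoids the ergodic theorem entirely and proves the full statement in one pass by covering $D(n,N)\subset NA$ with integer translates $T'(i)$ of $nA'$, building a null cutset for the large slab from the small cutsets plus the boundaries $\partial_e\sC_{G,0}(x)$ of clusters near $\cup_i\partial T'(i)$, the annulus $NA\setminus D(n,N)$, and the lateral sides, all on the event $\cF_{n,N}$ that clusters are small, and then bounding the resulting expectation to obtain $\limsup_N\leq\liminf_n$ with explicit $o(\cH^{d-1}(NA))$ error terms coming from Theorem \ref{controlcluster} and condition $(\star)$. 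The care needed to ensure $\slab(T'(i),H_{G,h'(n)}(T'(i)),\vv)\subset\slab(NA,H_{G,h(N)}(NA),\vv)$ and to control the side $\cE_0^3$ are exactly the points where your sketch is silent. As written, your proposal proves (a cleaner version of) the axis-aligned, fixed-$h$ analogue of Proposition \ref{prop1} for $\chi_G$, and then defers the actual statement to a citation; you would need to execute the covering/patching argument to close the gap, at which point you would essentially be reproducing the paper's proof, with your exact subadditivity replacing one of its estimates.
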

The proof of this proposition is inspired by the proof of Proposition 3.5. in \cite{Rossignol2010}. This idea was already present in \cite{Cerf:StFlour}. 
In fact we mimic the beginning (\textit{i.e.}, the easy part) of the proof of the subadditive ergodic theorem.

\begin{proof}
Let $\vv\in\sS^{d-1}$. Let us consider two non-degenerate hyperrectangles $A$ and $A'$ which are both orthogonal to the unit vector $\vv$, and two height functions $h,h' : \sN\rightarrow \sR^{+}$ that respect condition $(\star)$. As $\lim_{n\rightarrow \infty} h(n)=\lim_{n\rightarrow \infty} h'(n)=\infty$, if we take $n\in \sN$, there exists an $N_0(n)$ such that for all $N\geq N_0(n)$, we have $h(N)\geq h'(n)+2d+1$ and $N\Diam (A) > n\Diam (A')$ (where $\Diam(A)=\sup\{\|x-y\|_2\,:\, x,y \in A\}$). Our goal is to cover the biggest hyperrectangle $NA$ by translates of $nA'$. We do not want to cover the whole hyperrectangle $NA$ but at least the following subset of $NA$:
$$D(n,N):=\{x\in NA\, | \,d(x,\partial(NA))>2n\Diam A'\}\,,$$
where $\partial(NA)$ denotes the boundary of $NA$.
 \begin{figure}[h]
\def\svgwidth{0.8\textwidth}
\begin{center}
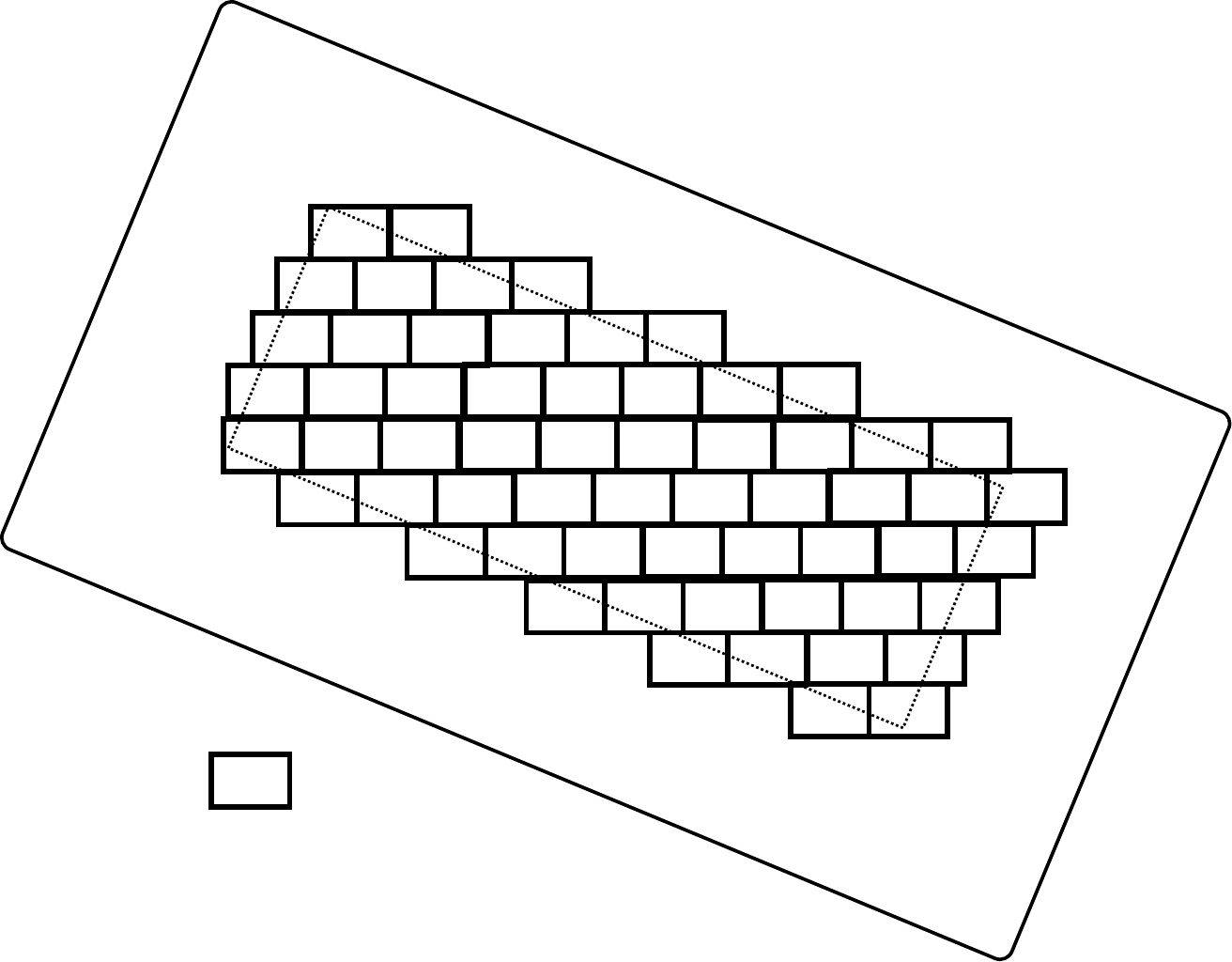
\caption{\label{fig2}Decomposition of $NA$ in translates of $nA'$}
\end{center}
\end{figure}

There exists a finite collection of hyperrectangles $(T(i))_{i\in I}$ such that $T(i)$ is a translate of $nA'$, each $T(i)$ intersects $D(n,N)$, the collection $(T(i))_{i\in I}$ have pairwise disjoint interiors, and their union $\cup_{i\in I}T(i)$ contains the set $D(n,N)$ (see Figure \ref{fig2}). By definition of $D(n,N)$, we also have that the union $\cup_{i\in I}T(i)$ is contained in $NA$.

The quantities $\E[\chi_G(T'(i),h'(n),\vv)]$ and $\E[\chi_G(nA',h'(n),\vv)]$ are not necessarily equal. Indeed, $T(i)$ is the translate of $nA'$ by a non-integer vector in general. Thus, instead of considering $T(i)$, let us consider $T'(i)$ which is the image of $nA'$ by an integer translation, and $T'(i)$ is the translated of $T(i)$ by a small vector. We want to choose $T'(i)$ such that $T'(i)\subset\slab(NA,h,\vv)$. More precisely, for all $i\in I$, there exist two vectors $\overrightarrow{t_i} \in \sR^{d}$ and $\overrightarrow{t_i}' \in \sZ^{d}$ such that $\|\overrightarrow{t_i}\|_\infty <1$, $\overrightarrow{t_i}\cdot \vv\geq 0$, $T'(i)=T(i)+\overrightarrow{t_i}$ and $T'(i)=nA'+\overrightarrow{t_i}'$. As for all $i\in I$, $\overrightarrow{t_i}\cdot \vv<\sqrt{d}$, the union $\cup_{i\in I}T'(i)$ is contained in $\slab(NA,d,\vv)$ (see Figure \ref{fig6}).

As for all $i\in I$, $T'(i)\in\slab(NA,d,\vv)$ and $h'(n)+2d<h(N)$, then we have $\cyl(T'(i),h'(n)/2)
\subset 
\cyl(NA,h(N)/2)$, and by definition of the random height  $\slab(T'(i),H_{G,h'(n)}(T'(i)),\vv)\subset\slab(NA,H_{G,h(N)}(NA),\vv)$.


The family $(\chi_G(T'(i),h(n),\vv))_{i\in I}$ is identically distributed but not independent. For all $i\in I$, let $E_i$ be a set that satisfies the infimum in the definition of $\chi_G(T'(i),h(n),\vv)$. We want to build from the family $(E_i)_{i\in I}$ a set of null capacity that cuts $\overline{NA}$ from $\hyp(NA+H_{G,h(N)}(NA))$ in $\slab(NA,H_{G,h(N)}(NA),\vv)$ on the event 
\begin{align}\label{event:F}
\cF_{n,N}=\bigcap _{x\in \cyl(NA,h(n)/2+d)}\left\{\card_v(\sC_{G,0}(x))<\frac{h(N)}{4}\right\}\,.
\end{align}

 \begin{figure}[H]
\def\svgwidth{1\textwidth}
\begin{center}
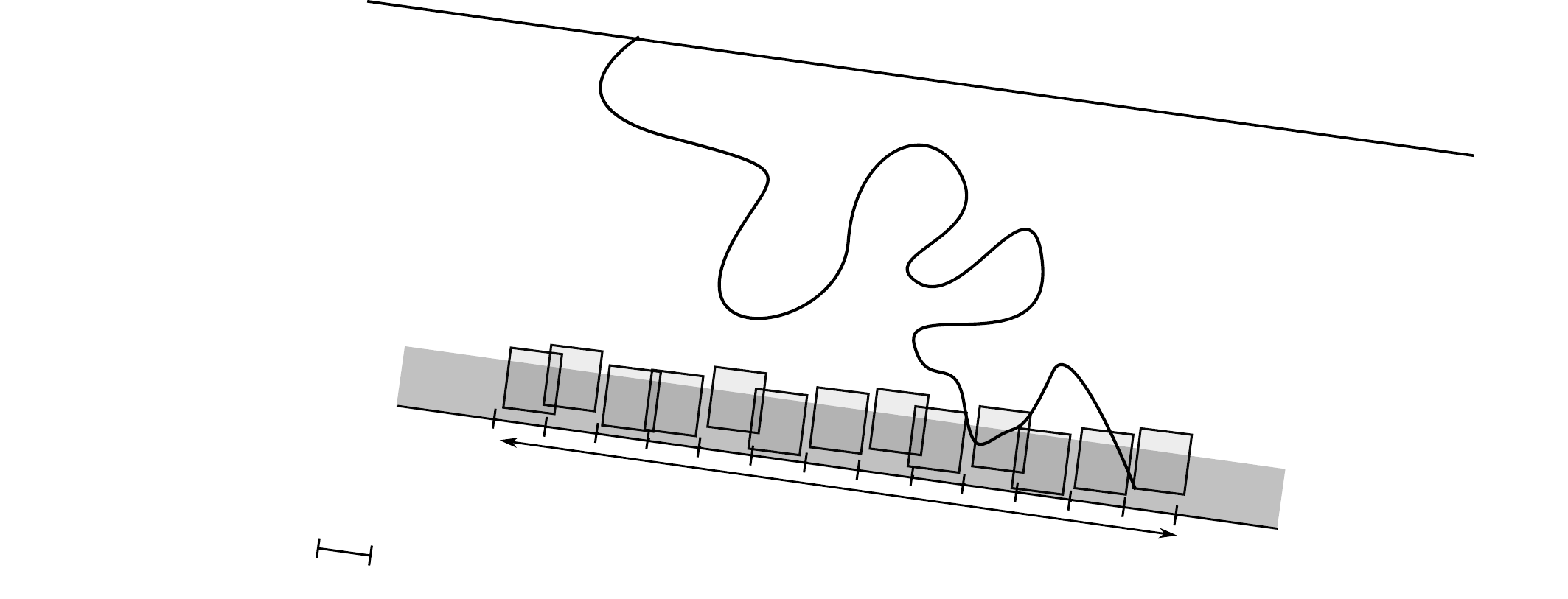
\caption{\label{fig6}Representation of the $\overline{T'(i)}$ for $i\in I$.}
\end{center}
\end{figure}

We fix $r=4d$. Let $V^{1}_{0}$ (resp. $V_0^{2}$, $V_0^{3}$, $V_0$) be the set of vertices included in $\cE^{1}_{0}$ (resp. $\cE_0^{2}$, $\cE_0^{3}$, $\cE_0$), where we define 

$$\cE^{1}_{0}=\bigcup_{i\in I}\cV(\partial T'(i),r),$$
$$\cE^{2}_{0}=\cV(NA\setminus D(n,N),r),$$
$$\cE^{3}_{0}=\cV(\cyl(\partial(NA),h(N)/2),r)$$
and
$$\cE_0=\cE^{1}_{0}\cup \cE^{2}_{0}\cup \cE^{3}_{0}\,.$$

Let us show that the set $E=\left( \cup_{i\in I} E_i\right) \cup \left( \cup_{x\in V_0}\partial_e\sC_{G,0}(x)\right)$ cuts $\overline{NA}$ from $\hyp(NA+H_{G,h(N)}(NA)\vv)$ in $\slab(NA,H_{G,h(N)}(NA),\vv)$ on the event $\cF_{n,N}$. Let $\gamma$ be a path from $x\in \overline{NA}$ to $y\in W(NA,H_{G,h(N)}(NA),\vv)$ that stays in $\slab(NA,H_{G,h(N)}(NA),\vv)$, we denote it by $\gamma=(x=v_0,e_1,v_1,\dots,e_m,v_m=y)$. Let us consider the last moment when $\gamma$ exits $\overline{NA}$, \textit{i.e.}, we define $p=\inf\{i\in\{0,\dots,m\},\forall j>i,\,v_j\notin \overline{NA}\}$. We distinguish several cases.
\begin {itemize}
\item If the edge $e_{p+1}$ cuts $\cyl(\partial(NA),d)\cup (NA\setminus D(n,N)+d\vv)$, then $v_p\in \cE^2_0$ and by definition of $H_{G,h(N)}(NA)$, we have $v_m=y\notin \sC_{G,0}(v_p)$, so $\gamma \cap \partial_e\sC_{G,0}(v_p)\neq \emptyset$.
\item If the edge  $e_{p+1}$ cuts $(D(n,N)+d\vv)\setminus (\bigcup_{i\in I} \overline{T'(i)})$, we define $\pi$ the orthogonal projection on $\hyp(NA)$ and $z=e_{p+1}\cap \hyp(NA+d\vv)$. As $\pi(z)\in D(n,N)$, there exists an $i\in I$ such that $\pi(z)\in T(i)$, $\pi(z)\notin \pi(T'(i))$  and so $\pi(z)\in T(i)\setminus \pi(T'(i))$. Moreover, as $T'(i)=T(i)+\overrightarrow{t_i}$ where $\|\overrightarrow{t_i}\|_\infty<1$, we get that $\pi(z)\in \cV(\pi(\partial T'(i)),d)\cap \hyp(NA)$ and $v_p\in \cV(\partial T'(i),r)\subset \cE^1 _0$. We conclude as in the previous case that $\gamma \cap \partial_e\sC_{G,0}(v_p)\neq \emptyset$.
\item If there exists an $i\in I$ such that the edge $e_{p+1}$ cuts $\overline{T'(i)}\cap \hyp(NA+d\vv)$ we distinguish two cases. If $v_p\notin \overline{T'(i)}$, as $T'(i)\subset \slab(NA, \sqrt{d},\vv)$, the vertex $v_p$ cannot be "under" $T'(i)$, \textit{i.e.}, in $\slab(NA,\overrightarrow{t_i}\cdot \vv ,\vv)$. Therefore, the vertex $v_p$ belongs to $\cV(\partial T'(i),d)\subset \cE_0^1$, we conclude as in the previous cases that $\gamma\cap \partial _ e\sC_{G,0}(v_p)$. We now assume that $v_p\in \overline{T'(i)}$ (see Figure \ref{fig7}). Let us consider the first time after $p$ when $\gamma$ cuts $\hyp(T'(i)+H_{G,h'(n)}(T'(i))\vv)\cup \cyl(\partial(NA),h(N)/2)$. On the event $\cF_{n,N}$, we have the following events: $\slab(T'(i),H_{G,h'(n)}(T'(i)),\vv)\subset \slab(NA,h(N)/2,\vv)$, $v_{p+1}\in \slab(T'(i),H_{G,h'(n)}(T'(i)),\vv)\cap \cyl(NA,h(N)/2)$ and the vertex $y$ does not belong to $ \slab(T'(i),H_{G,h'(n)}(T'(i)),\vv)\cap \cyl(NA,h(N)/2)$. Therefore, $l=\inf\{j>p+1,\, e_j \text{ cuts } \hyp(T'(i)+H_{G,h'(n)}(T'(i))\vv)\cup \cyl(\partial(NA),h(N)/2)\}$ is well defined. Note that by definition of $v_p$, $\gamma$ cannot exit $\slab(T'(i),H_{G,h'(n)}(T'(i)),\vv)$ by $\hyp(T'(i))$, otherwise, $\gamma$ would come back in $\overline{NA}$.
Next, if $e_l$ cuts $\cyl(\partial(NA),h(N)/2)$ then $v_{l-1}\in \cE_0^3$ and by definition of $H_{G,h(N)}(NA)$, $y\notin \sC_{G,0}(v_{l-1})$ and $\gamma \cap \partial_e \sC_{G,0}(v_{l-1})\neq \emptyset$. 
Let us now assume that $e_l$ cuts $\hyp(T'(i)+H_{G,h'(n)}(T'(i))\vv)$, then $v_{l-1}\in W(T'(i),H_{G,h'(n)}(T'(i)),\vv)$, the portion of $\gamma$ from $v_p$ to $v_{l-1}$ is a path from $\overline{T'(i)}$ to  $\hyp(T'(i)+H_{G,h'(n)}(T'(i))\vv)$ that, by definition of $v_{l-1}$, stays in $\slab(T'(i),H_{G,h'(n)}(T'(i)),\vv)$ and so $\gamma\cap E_i\neq \emptyset$.

\end{itemize}

 \begin{figure}[H]
\def\svgwidth{1\textwidth}
\begin{center}
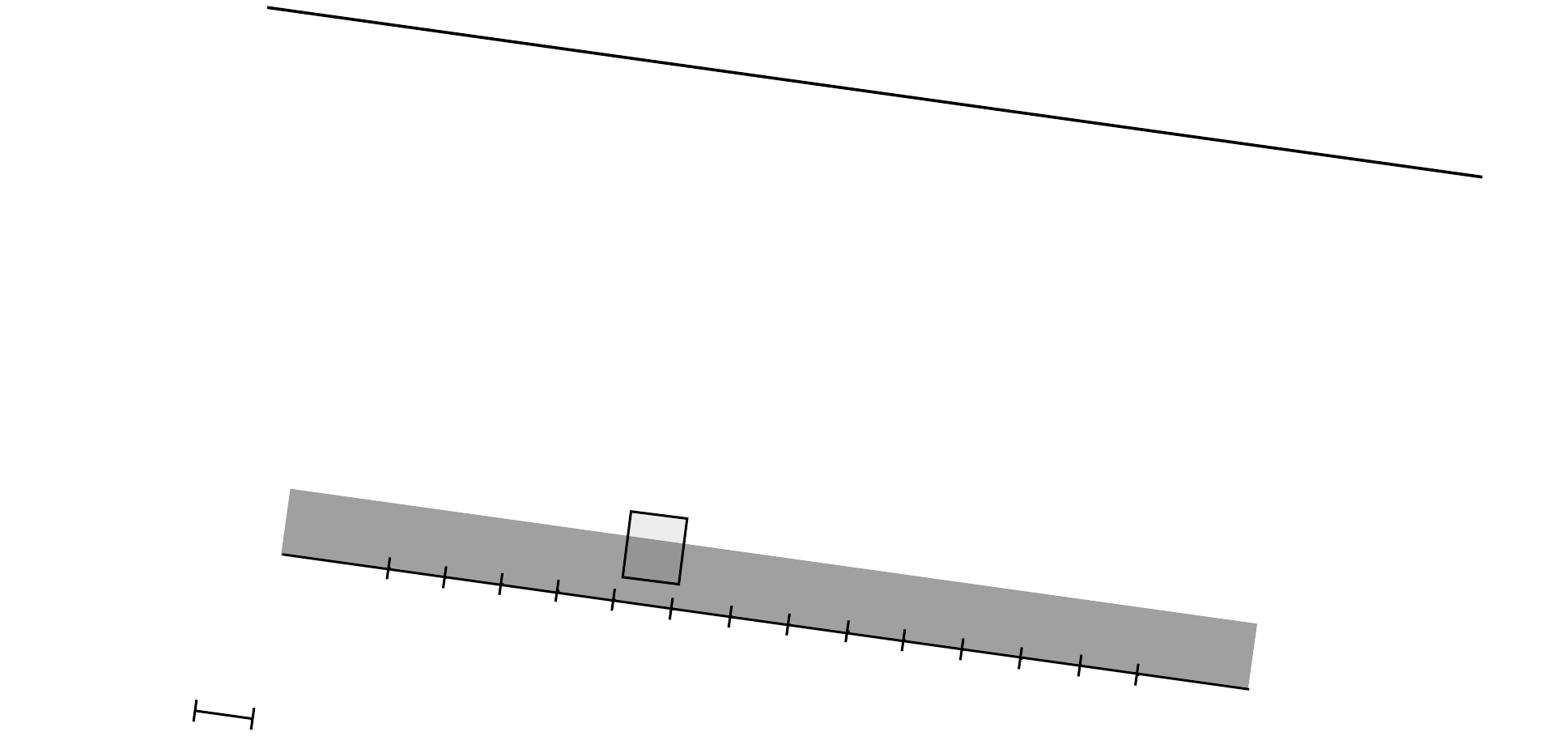
\caption[fig7]{\label{fig7}A path from $\overline{NA}$ to $\hyp(NA+H_{G,h(N)}(NA)\vv)$ in $\slab(NA,H_{G,h(N)}(NA),\vv)$ such that $v_p \in \overline{T'(i)}$ for an $i \in I$ .}
\end{center}
\end{figure}
Therefore, we conclude that the set $E$ cuts $\overline{NA}$ from $\hyp(NA+H_{G,h(N)}(NA))$ in $\slab(NA,H_{G,h(N)}(NA),\vv)$ on the event $\cF_{n,N}$. Since for all $i\in I$, the set $E_i$ has null capacity and for any $x\in \sZ^{d}$, the set $\partial_e\sC_{G,0}(x)$ contains only edges with null capacity, the set $E$ itself has null capacity. Thus, we can upperbound the quantity $\chi_G(NA,h(N),\vv)$ by the size of $E$ on the event $\cF_{n,N}$ and by the size of $\bigcup_{x\in \overline{NA}\cap \sZ^d} \partial_e\sC_{G,0}(x)$ on the event $\cF_{n,N}^c$ (by Lemma \ref{lem1}):
\begin{align*}
\chi_G(NA,h(N),\vv)&\leq \chi_G(NA,h(N),\vv)\ind_{\cF_{n,N}}+\chi_G(NA,h(N),\vv)\ind_{\cF_{n,N}^c} \\
&\leq\sum_{i\in I}|E_i|+\sum_{x\in V_0}|\partial_e\sC_{G,0}(x)|+\left(\sum_{x\in \overline{NA}\cap \sZ^d}|\partial_e\sC_{G,0}(x)|\right) \ind_{\cF_{n,N}^c}\\
&\leq \sum_{i\in I}\chi_G(T'(i),h'(n),\vv)+\sum_{x\in V_0}|\partial_e\sC_{G,0}(x)|\\
&\hspace{1cm}+\left(\sum_{x\in \overline{NA}\cap \sZ^d}|\partial_e\sC_{G,0}(x)|\right) \ind_{\cF_{n,N}^c}\, .
\end{align*}
Taking the expectation we get 
\begin{align}\label{expchi}
\frac{\E[\chi_G(NA,h(N),\vv)]}{\cH^{d-1}(NA)}&\leq \sum_{i\in I}\frac{\E[\chi_G(T'(i),h'(n),\vv)]}{\cH^{d-1}(NA)}+\sum_{x\in V_0}\frac{\E[|\partial_e\sC_{G,0}(x)|]}{\cH^{d-1}(NA)}\nonumber\\
&\hspace{1cm}+\sum_{x\in \overline{NA}\cap \sZ^d}\frac{\E[|\partial_e\sC_{G,0}(x)|\ind_{\cF_{n,N}^c}]}{\cH^{d-1}(NA)}\nonumber\\
&\leq |I|\frac{\E[\chi_G(nA',h'(n),\vv)]}{\cH^{d-1}(NA)}+\frac{\card_v(V_0)}{\cH^{d-1}(NA)}\E[|\partial_e\sC_{G,0}(0)|]\nonumber\\
&\hspace{1cm}+\sum_{x\in \overline{NA}\cap \sZ^d}\frac{\sqrt{\E[|\partial_e\sC_{G,0}(x)|^2] \Prb[\cF_{n,N}^c]}}{\cH^{d-1}(NA)}\nonumber\\
&\leq \frac{\E[\chi_G(nA',h'(n),\vv)]}{\cH^{d-1}(nA')}+\frac{\card_v(V_0)}{\cH^{d-1}(NA)}\E[|\partial_e\sC_{G,0}(0)|]\nonumber\\
&\hspace{1cm}+\card(\overline{NA}\cap \sZ^d)\frac{\sqrt{\E[|\partial_e\sC_{G,0}(0)|^2] \Prb[\cF_{n,N}^c]}}{\cH^{d-1}(NA)}
\end{align}
where we use in the second inequality Cauchy-Schwartz' inequality. 
By definition of $\cF_{n,N}$ (see \eqref{event:F}) and using Theorem \ref{controlcluster}, we obtain the following upperbound:
\begin{align*}
\Prb[\cF_{n,N}^c]&\leq \sum_{x\in \cyl(NA,h(n)/2+d)\cap \sZ^d}\Prb[\card_v(\sC_{G,0}(x))\geq h(N)/4]\\
&\leq \card_v(\cyl(NA,h(n)/2+d)\cap\sZ^d)\Prb[\card_v(\sC_{G,0}(0))\geq h(N)/4]\\
&\leq c'_d \cH^{d-1}(NA)h(n)\kappa_1\exp(-\kappa_2 h(N)/4)
\end{align*}
where $c'_d$ is a constant depending only on the dimension $d$. We recall that by Theorem \ref{controlcluster}, $\E[|\partial_e\sC_{G,0}(0)|]<\infty$ and $\E[|\partial_e\sC_{G,0}(0)|^2]<\infty$ because $G(\{0\})>1-p_c(d)$. Moreover, as $h(N)/\log(N)$ goes to infinity when $N$ goes to infinity, the third term in the right hand side of \eqref{expchi} goes to $0$ when $N$ goes to infinity.
We now want to control the size of $V_0$. There exists a constant $c_d$ depending only on the dimension $d$ such that:
$$\card_v(V^1_0)\leq c_d \frac{\cH^{d-1}(NA)}{\cH^{d-1}(nA')}\cH^{d-2}(\partial(nA'))\,,$$
 $$\card_v(V^2_0)\leq c_d \cH^{d-2}(\partial(NA))\Diam(nA')$$
 and
 $$\card_v(V^3_0)\leq c_d \cH^{d-2}(\partial(NA))h(N)\, ,$$
thus 
\begin {align*}
\card_v(V_0)&\leq c_d \Bigg (\frac{\cH^{d-1}(NA)}{\cH^{d-1}(nA')}\cH^{d-2}(\partial(nA'))\\
& \hspace{3cm} +\cH^{d-2}(\partial(NA))(\Diam(nA')+h(N))\Bigg)
\end{align*}
and finally since $h(N)/N$ goes to $0$ as $N$ goes to infinity we obtain
$$\lim_{n\rightarrow \infty}\lim_{N\rightarrow \infty}\frac{\card_v(V_0)}{\cH^{d-1}(NA)}=0\,.$$
By first sending $N$ to infinity and then $n$ to infinity in inequality \eqref{expchi}, we get that
$$\limsup_{N\rightarrow \infty}\frac{\E[\chi_G(NA,h(N),\vv)]}{\cH^{d-1}(NA)}\leq \liminf_{n\rightarrow \infty}\frac{\E[\chi_G(nA',h'(n),\vv)]}{\cH^{d-1}(nA')}\, .$$

By setting $A=A'$ and $h=h'$, we deduce the existence of the limit $\lim_{n\rightarrow \infty}\E[\chi_G(nA,h(n),\vv)]/\cH^{d-1}(nA)$ and the inequality
$$\lim_{n\rightarrow \infty}\frac{\E[\chi_G(nA,h(n),\vv)]}{\cH^{d-1}(nA)}\leq \lim_{n\rightarrow \infty}\frac{\E[\chi_G(nA',h'(n),\vv)]}{\cH^{d-1}(nA')}\,.$$
Exchanging the role of $A, h$ and $A', h'$, we conclude that the two limits are equal. Note that $\chi_G$ does not depend on all the distribution $G$ but only on $G(\{0\})$. Indeed, let us couple $(t_G(e))_{e\in\E^d}$ with a family  $(\hat{t}(e))_{e\in\E^d}$ of Bernoulli of parameter $1-G(\{0\})$ in the following way: for an edge $e\in\E^d$, $\hat{t}(e)=\ind_{t_G(e)>0}$. With this coupling, the value of $\chi_G$ is the same for the two families of capacities. Therefore, the limit does not depend on $A$ nor $h$ but only on the direction $\vv$, on $G(\{0\})$ and on $d$, we denote it by $\zeta_{G(\{0\})}(\vv)$.
Thanks to inequality \eqref{control}, we know that there exists a constant $c'_d$ depending only on the dimension $d$ such that
\begin{align*}
\frac{\E[\chi_G(nA,h(n),\vv)]}{\cH^{d-1}(nA)}\leq c'_d \E[\card_v(\sC_{G,0}(0))]<\infty \,,
\end{align*}
thus $\zeta_{G(\{0\})}(\vv)$ is finite. 
\end{proof}

\section{From slabs to cylinders}\label{s5}
We recall that the quantity of interest is the flow through the cylinder, and that we have studied the flow from a thick rectangle to an hyperplane for technical reasons. In this section we are going to show that these flows are quite similar, more precisely we want to show the following proposition.

\begin{prop}\label{egalite} Let $G$ be a distribution on $[0,+\infty]$ such that $G(\{0\})>1-p_c(d)$. For any $\vv\in\sS^{d-1}$, for any non-degenerate hyperrectangle $A$ normal to $\vv$, for any height function $h$ that satisfies condition $(\star)$,
$$\lim_{n\rightarrow \infty}\frac{\E[\chi_G(nA,h(n),\vv)]}{\cH^{d-1}(nA)}=\lim_{n\rightarrow \infty}\frac{\E[\psi_G(nA,h(n),\vv)]}{\cH^{d-1}(nA)}=\zeta_{G(\{0\})}(\vv)\,.$$
\end{prop}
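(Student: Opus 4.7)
The plan is to prove the second equality, $\lim_{n\to\infty} \E[\psi_G(nA,h(n),\vv)]/\cH^{d-1}(nA) = \zeta_{G(\{0\})}(\vv)$, by establishing matching $\limsup$ and $\liminf$ bounds; the first equality is Proposition~\ref{subb}. Throughout I work on the favourable event
$$\cF_n \;:=\; \bigcap_{x \in \cyl(nA,h(n))\cap\sZ^d}\bigl\{\card_v(\sC_{G,0}(x))<h(n)/4\bigr\},$$
whose complement has, by Theorem~\ref{controlcluster}, probability at most $c_d\,\cH^{d-1}(nA)\,h(n)\,\kappa_1 e^{-\kappa_2 h(n)/4}$. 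Combined with the deterministic bound $\psi_G,\chi_G=O(n^{d-1}h(n))$ and the square-root trick, condition~$(\star)$ makes the contribution of $\cF_n^c$ to the expectations of interest $o(n^{d-1})$, so only the behaviour on $\cF_n$ matters in the limit.

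For the upper bound, set $h'(n):=h(n)/2$ and let $E^{\chi}$ be a minimizer of $\chi_G(nA,h'(n),\vv)$: a null-capacity set sitting inside $\slab(nA,H',\vv)$ with $H':=H_{G,h'(n)}(nA)$. On $\cF_n$, every positive-edge cluster meeting $\cyl(nA,h'(n)/2)$ has diameter $<h(n)/4$, so $H'\le h'(n)+h(n)/4<h(n)$ and the slab sits strictly below the top of the cylinder. Any top-to-bottom path $\gamma$ of $\cyl(nA,h(n))$ runs from $T(nA,h(n))$ at height close to $h(n)>H'$ down to $B(nA,h(n))\subset\overline{nA}\cap\sZ^d$ near height~$0$, so at its last downward crossing of height $H'$ it passes through a vertex of $W(nA,H',\vv)$; the portion of $\gamma$ below that crossing lies entirely in $\slab(nA,H',\vv)$ and is a path from $W(nA,H',\vv)$ to $\overline{nA}$, hence meets $E^{\chi}$. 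Thus $E^{\chi}\cap E_{\cyl(nA,h(n))}$ is a null-capacity cylinder cutset, which forces $\Phi_G(nA,h(n))=0$ on $\cF_n$ and $\psi_G(nA,h(n),\vv)\le\chi_G(nA,h'(n),\vv)$. Proposition~\ref{subb} applied with height function $h/2$ then yields $\limsup_n\E[\psi_G(nA,h(n),\vv)]/\cH^{d-1}(nA)\le\zeta_{G(\{0\})}(\vv)$.

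For the matching lower bound, I mirror the subadditive construction of Proposition~\ref{subb} but feed in $\psi_G$-minimizers $E^{\psi}_i$ in sub-cylinders $\cyl(T'(i),h'(n))$ covering $D(n,N)\subset NA$. On the sub-scale analogue of $\cF_n$ each $E^{\psi}_i$ has null capacity by the upper-bound argument; on its complement I replace $E^{\psi}_i$ by a $\chi_G(T'(i),h'(n),\vv)$-minimizer (null by definition), whose expected cost is negligible after summing and dividing by $\cH^{d-1}(NA)$ in the $n\to\infty$ limit. Gluing the sub-cutsets via
$$E \;:=\; \bigcup_{i\in I} E^{\psi}_i \;\cup\; \bigcup_{x\in V_0}\partial_e\sC_{G,0}(x),$$
where $V_0$ is a $d$-neighbourhood of the interfaces $\partial T'(i)$, of $NA\setminus D(n,N)$, of $\cyl(\partial(NA),h(N))$, and of a thin roof spanning heights $h'(n)$ to $h(N)$, a case analysis analogous to that of Proposition~\ref{subb} shows that $E$ is a null-capacity cutset of $\overline{NA}$ from $W(NA,H_{G,h(N)}(NA),\vv)$ in $\slab(NA,H_{G,h(N)}(NA),\vv)$: a slab path entering sub-cylinder~$i$ from above necessarily crosses $T_i$ and is caught by $E^{\psi}_i$, while any path straying through the sides, the roof, or the interstices between adjacent $\overline{T'(i)}$'s meets a boundary cluster. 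Since $\card_v(V_0)=o(N^{d-1})$ and $\E[|\partial_e\sC_{G,0}(0)|]<\infty$, passing to expectations and first letting $N\to\infty$ then $n\to\infty$ in
$$\frac{\E[\chi_G(NA,h(N),\vv)]}{\cH^{d-1}(NA)}\le \frac{\E[\psi_G(nA',h'(n),\vv)]}{\cH^{d-1}(nA')}+o_n(1)+o_N(1)$$
yields $\zeta_{G(\{0\})}(\vv)\le \liminf_n \E[\psi_G(nA',h'(n),\vv)]/\cH^{d-1}(nA')$, which combined with the upper bound proves the proposition.

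\textbf{The hardest step} is the lower bound's case analysis: unlike in Proposition~\ref{subb}, where each sub-cutset separates a full thick base $\overline{T'(i)}$ from a far hyperplane, each $E^{\psi}_i$ here only separates $T_i$ from $B_i$, so one must argue that any slab path threading a sub-cylinder either crosses $T_i$ before escaping (and is then caught by $E^{\psi}_i$) or is intercepted by a boundary cluster of a small positive-edge cluster. The construction of $V_0$ — especially the roof layer between heights $h'(n)$ and $h(N)$ — must block every such tangential path while keeping $\card_v(V_0)=o(N^{d-1})$, which rests crucially on the exponential decay in Theorem~\ref{controlcluster} together with $h(N)/N\to0$ from condition~$(\star)$.
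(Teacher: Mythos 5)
Your upper bound (bounding $\psi_G$ by $\chi_G$ with height $h/2$ on a good event) is essentially the paper's Step~(i), just with a slightly different parameterisation of the good event; it is sound. The real problem is in your lower bound.

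To show $\zeta_{G(\{0\})}(\vv)\le\liminf_n\E[\psi_G]/\cH^{d-1}(nA)$, the paper's Step~(ii) works with a \emph{single} cylinder: it takes a $\psi_G$-minimiser $E$ in $\cyl(nA,h(n))$, adds boundary clusters along the vertical sides (cost $O(n^{d-2}h(n))=o(n^{d-1})$), and shows the result cuts $\overline{T'}$ from $\hyp(nA+h(n)\vv)$ where $T'$ is shifted \emph{below} $nA$. This downward shift is the key geometric trick: any slab path from $\overline{T'}$ (strictly below the cylinder) to $\hyp(nA+h(n)\vv)$ that stays inside the cylinder must pass through $B(nA,h(n))$ when it crosses height $0$, so it is genuinely a bottom-to-top path and hence meets $E$. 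You instead try to re-run the subadditive gluing of Proposition~\ref{subb} with $\psi_G$-minimisers $E^{\psi}_i$ in sub-cylinders $\cyl(T'(i),h'(n))$ replacing the $\chi_G$-minimisers. This substitution does not go through, and the issue is exactly the one you flag as ``the hardest step'' but then gloss over.

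A $\chi_G$-minimiser $E_i$ separates the \emph{thick base} $\overline{T'(i)}=\cyl(T'(i),d)$ (spanning heights roughly $\overrightarrow{t_i}\cdot\vv$ to $\overrightarrow{t_i}\cdot\vv+d$) from the far hyperplane in the full slab. A $\psi_G$-minimiser $E^{\psi}_i$ only separates the thin top layer $T(T'(i),h'(n))$ from the thin bottom layer $B(T'(i),h'(n))$, and only inside the sub-cylinder. In the gluing, the path's last exit from $\overline{NA}$ is at $v_p$ at height $\approx d$, which is \emph{above} $B(T'(i),h'(n))$ (at height $\approx\overrightarrow{t_i}\cdot\vv<\sqrt{d}<d$). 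So the portion of $\gamma$ from $v_{p+1}$ onward can go straight up inside $\cyl(T'(i),h'(n))$ and exit through the top, never touching $B(T'(i),h'(n))$ — and such a middle-to-top path need not meet $E^{\psi}_i$ (take for instance the perfectly legal ``floor'' cutset $\bigcup_{x\in B(T'(i),h'(n))}\partial_e\sC_{G,0}(x)$, which on the good event sits strictly below height $d$). Your proposed $V_0$ does not help: the neighbourhood $\cV(\partial T'(i),r)$ of Proposition~\ref{subb} lies near the base and misses this path; covering the full vertical walls $\cyl(\partial T'(i),h'(n))$ still does not catch a path that stays in the horizontal interior of the sub-cylinder; and adding boundary clusters over the whole thick base $\overline{T'(i)}$ for every $i$ would cost order $\card(I)\cdot n^{d-1}=\Theta(N^{d-1})$, which is no longer negligible. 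A ``roof'' layer would have the same problem: if it is thin enough to be $o(N^{d-1})$ it fails to block all paths, and if it blocks all paths it costs $\Theta(N^{d-1})$. So your construction $E=\bigcup_i E^{\psi}_i\cup\bigcup_{x\in V_0}\partial_e\sC_{G,0}(x)$ is, in general, not a cutset for the slab, and the inequality you write down is unjustified. The clean route is the paper's: prove $\chi_G(T',h(n)-\overrightarrow{u}\cdot\vv,\vv)\le\psi_G(nA,h(n),\vv)+o(n^{d-1})$ for a single cylinder (with the downward shift), then invoke Proposition~\ref{subb} for the translated base; redoing the tiling with cylinder cutsets is both harder and, as it stands, incorrect.
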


\begin{proof}
Let $A$ be a non-degenerate hyperrectangle and $h$ a height function satisfying condition $(\star)$. Let $\vv$ be one of the two unit vectors normal to $A$. We prove Proposition \ref{egalite} in two steps. In the first step, we obtain an upper bound for $\E[\chi_G(nA,h(n),\vv)]$ by building a cutset of null capacity between the top and the bottom of $\cyl(nA,h(n))$ from a cutset in $\slab(nA,h(n),\vv)$ that achieves the infimum in $\psi_G(nA,h(n),\vv)$. In the second step, we obtain a lower bound for $\E[\chi_G(nA,h(n),\vv)]$  by doing the reverse, \textit{i.e.}, we build a cutset between a translate of $\overline{nA}$ and $\hyp(nA+h(n)\vv)$, from a cutset in $\cyl(nA,h(n))$ that achieves the infimum in the definition of $\chi_G(nA,h(n),\vv)$.
\begin{figure}[H]
\def\svgwidth{1.1\textwidth}
\begin{center}
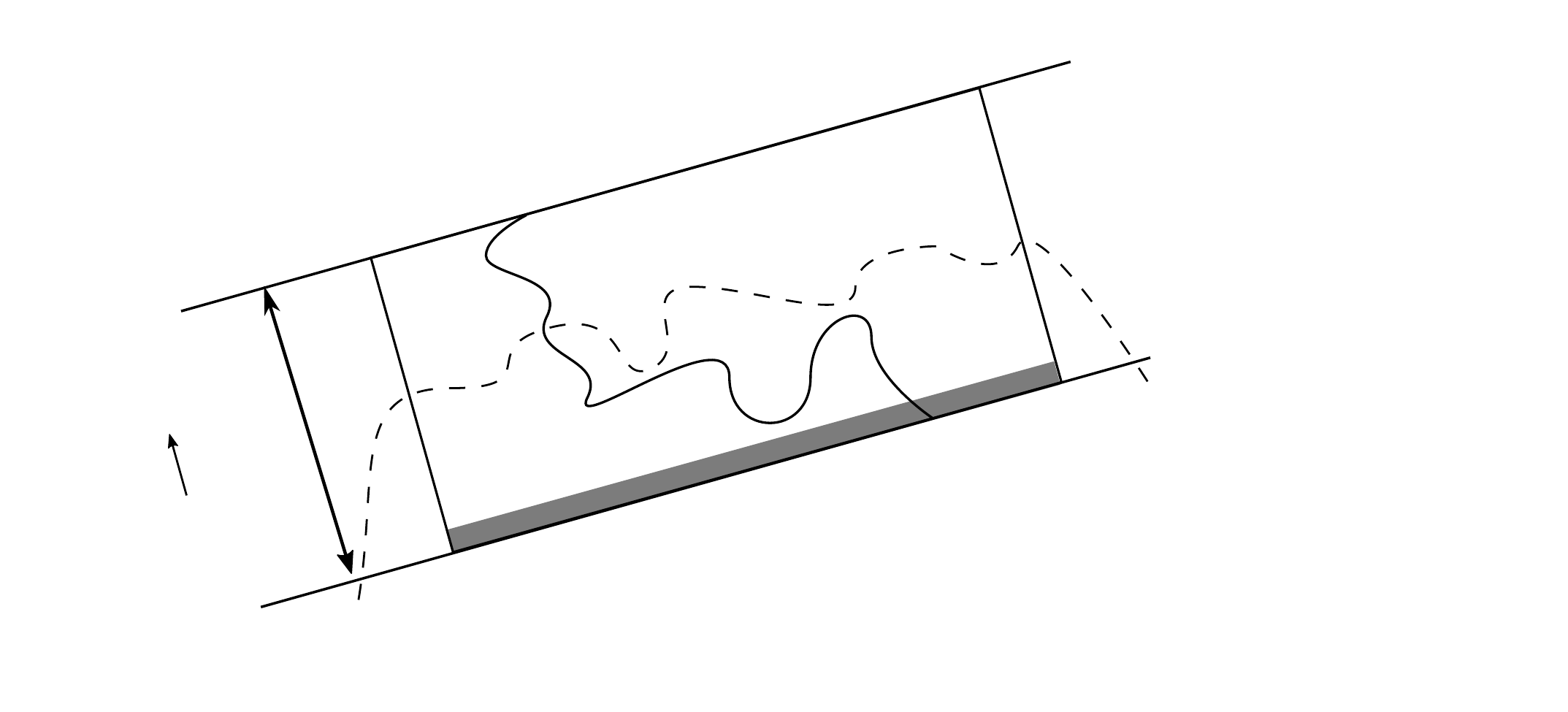
\caption[fig3]{\label{fig3}A cutset that cuts $\overline{nA}$ from $\hyp(nA+h(n)\vv)$ in $\slab(nA,h(n),\vv)$ and the top from the bottom of the cylinder $\cyl(nA,h(n))$ on the event $\cE_n$.}
\end{center}
\end{figure}
\textit{Step (i)}: We denote by $\cE_n$ the following event 
$$\cE_n=\bigcap_{x\in \cyl(nA,h(n)/2)\cap\sZ^d} \left\{\card_v(\sC_{G,0}(x))<\frac{h(n)}{2}\right\}\,.$$

On the event $\cE_n$, we have that $H_{G,h(n)}(nA)=h(n)$. By definition, we have $B(nA,h(n))\subset {\overline{nA}\cap\sZ^d}$ and $T(nA,h(n))\subset W(nA,h(n),\vv)$. 
On the event $\cE_n$, as any path from the top to the bottom of $\cyl(nA,h(n))$ is also a path from $\hyp(nA+h(n)\vv)$ to $\overline{nA}$ in $\slab(nA,h(n),\vv)$, any cutset that cuts $\hyp(nA+h(n)\vv)$ from $\overline{nA}$ is also a cutset from the top to the bottom in the cylinder (see Figure \ref{fig3}). Finally, any cutset that achieves the infimum in $\chi_G(nA,h(n),\vv)$ is a cutset of null capacity (and therefore of minimal capacity) for the flow from the top to the bottom in cylinder $\cyl(nA,h(n))$. 
Thus, on the event $\cE_n$, 
$$\psi_G(nA,h(n),\vv)\leq \chi_G(nA,h(n),\vv)\,.$$
Finally, for a constant $C_d$ depending only on the dimension $d$. 
\begin{align*}
\frac{\E[\psi_G(nA,h(n),\vv)]}{\cH^{d-1}(nA)}&\leq \frac{\E[\psi_G(nA,h(n),\vv)\mathds{1}_{\cE_n}]}{\cH^{d-1}(nA)}+\frac{\E[\psi_G(nA,h(n),\vv)\mathds{1}_{\cE_n^c}]}{\cH^{d-1}(nA)}\\
&\leq \frac{\E[\chi_G(nA,h(n),\vv)]}{\cH^{d-1}(nA)}+\frac{\card_e(\cyl(nA,h(n))\cap \E^d)\,\cdot\Prb(\cE_n^c)}{\cH^{d-1}(nA)}\\
&\leq \frac{\E[\chi_G(nA,h(n),\vv)]}{\cH^{d-1}(nA)}\\
&\hspace{2cm}+C_d h(n)^2\cH^{d-1}(nA)\kappa_1\exp(-\kappa_2 h(n)/2)\, ,
\end{align*}
where we use in the last inequality Theorem \ref{controlcluster}.
As $h$ satisfies condition $(\star)$ the second term of the right hand side goes to $0$ when $n$ goes to infinity and we obtain 
\begin{align}\label{limsup}
\limsup_{n\rightarrow \infty}\frac{\E[\psi_G(nA,h(n),\vv)]}{\cH^{d-1}(nA)}\leq \lim_{n\rightarrow \infty}\frac{\E[\chi_G(nA,h(n),\vv)]}{\cH^{d-1}(nA)}=\zeta_{G(\{0\})}(\vv)\,.
\end{align}

\textit{Step (ii)}: There exists an hyperrectangle $T'$, a small vector $\overrightarrow{t}$ and an integer vector $\overrightarrow{u}$ such that $T'=nA+\overrightarrow{u}$, $T'=nA-d\vv+\overrightarrow{t}$, $\|\overrightarrow{t}\|_\infty< 1$ and $\overrightarrow{t}\cdot \vv\leq 0$. Therefore, we have $-d-\sqrt{d}\leq \overrightarrow{u}\cdot \vv< -d$ and $\overline{T'}\subset \slab(A,\infty,\vv)^c$.
We now want to build a set of edges of null capacity that cuts $\overline{T'}$ from $\hyp(T' +H_{G,h(n)-\overrightarrow{u}\cdot \vv}(T')\vv)$ starting from a cutset between the top and the bottom of the cylinder $\cyl(nA,h(n))$. We define
$$\cE'_n=\bigcap_{x\in \cV(\cyl(nA,h(n)/2),2d)\cap\sZ^d} \left\{\card_v(\sC_{G,0}(x))<\frac{h(n)}{2}\right\}\,.$$
On the event $\cE'_n$, the minimal capacity of a cutset for the flow from the top to the bottom of the cylinder $\cyl(nA,h(n))$ is null (the set of null capacity  $\cup_{x\in\overline{nA}\cap \sZ^d}\partial_e\sC_{G,0}(x)$ is a cutset) and as the cylinder $\cyl(T',(h(n)-\overrightarrow{u}\cdot \vv)/2)$ is included in $\cV(\cyl(nA,h(n)/2),2d)$, we obtain $H_{G,h(n)-\overrightarrow{u}\cdot \vv}(T')=h(n)-\overrightarrow{u}\cdot \vv$ so that $\hyp(T'+H_{G,h(n)-\overrightarrow{u}\cdot \vv}(T')\vv)=\hyp(nA+h(n)\vv)$. We denote by $E$ one of the sets that achieve the infimum in $\psi_G(nA,h(n),\vv)$. In order to build a set that cuts $\overline{T'}$ from $\hyp(nA+h(n)\vv)$ from $E$, we need to add to $E$ edges to prevent flow from escaping through the vertical sides of $\cyl(nA,h(n))$. Let $V$ be a set that contains a discretized version of the vertical sides of $\cyl(nA,h(n))$. More precisely, we define by $V=\cV(\cyl(\partial(nA),h(n)),2d)\cap \sZ^d$. 

 
We aim to show that the following set 
$$F=E\cup \left(\bigcup_{x\in V}\partial_e\sC_{G,0}(x)\right)$$
cuts $\overline{T'}$ from $\hyp(nA+h(n)\vv)$ on the event $\cE'_n$ (see Figure \ref{fig4}). 


Let $\gamma=(y=v_0,e_1,v_1,\dots, e_m,v_m=x)$ be a path from $y\in W(nA,h(n),\vv)$ to $x\in \overline{T'}$ that stays in $\slab(T',h(n)-\overrightarrow{u}\cdot \vv,\vv)$. Let us consider the first moment $\gamma$ exits $\slab(nA,h(n),\vv)$, \textit{i.e.}, we define $p=\inf\{i\in\{0,\dots,m\}, \,v_j\notin \slab(nA,h(n),\vv)\}$.  We distinguish several cases. 

\begin{itemize}
\item Suppose that $v_{p-1}\in B(nA,h(n))$ and  $\gamma'=(v_0,e_1,\cdots,e_{p-1},v_{p-1})$, the portion of $\gamma$ between $v_0$ and $v_{p-1}$, stays in cylinder $\cyl(nA,h(n))$. Then $\gamma'$ is a path from the top to the bottom of $\cyl(nA,h(n))$ that stays in $\cyl(nA,h(n))$, thus $\gamma'\cap E \neq \emptyset$ and $\gamma\cap E\neq \emptyset$.
\item Suppose that $v_{p-1}\in B(nA,h(n))$ and that $\gamma'$ does not stay in the cylinder $\cyl(nA,h(n))$. Thus $\gamma'$ must intersect the boundary of the cylinder $\cyl(nA,h(n))$. As $\gamma'$ stays in $\slab(nA,h(n),\vv)$, $\gamma'$ can only intersect the vertical sides of the cylinder, \textit{i.e.}, $\cyl(\partial(nA),h(n))$, we obtain $\gamma'\cap V \neq \emptyset$. There exists $z\in V$ such that $\gamma'\cap \{z\}\neq \emptyset$. On the event $\cE'_n$, $\gamma'$ cannot be included in $\sC_{G,0}(z)$. Indeed, if $\gamma' \subset\sC_{G,0}(z)$, then $\gamma' \subset \sC_{G,0}(x)$ and $\sC_{G,0}(x)$ has a diameter at least $h(n)$, it is impossible on the event $\cE'_n$. Therefore we obtain $\gamma' \cap \partial_e \sC_{G,0}(z)\neq \emptyset$ and $\gamma\cap F \neq \emptyset$.
\item Suppose now that $v_{p-1}\notin B(nA,h(n))$, thus $v_{p-1}\notin \cyl(nA,h(n))$. If $x\in V$, we conclude as in the previous case that on the event $\cE'_n$, $\gamma \cap \partial_e\sC_{G,0}(x)\neq \emptyset$ and $\gamma \cap F \neq \emptyset$. If $x\notin V$, then $x\in \cyl(nA-2d \vv,h(n)+2d)$. As $v_{p-1}\notin \cyl(nA-2d \vv,h(n)+2d)$ and $\gamma$ stays in $\slab(nA-2d\vv, h(n)+2d,\vv)$, $\gamma$ cuts $\cyl (\partial(nA-2d\vv),h(n)+2d)$ and $\gamma\cap V \neq \emptyset$. We conclude as in the previous cases that $\gamma \cap F \neq \emptyset$.   

\end{itemize}

On the event $\cE'_n$, we obtain that $\gamma\cap F\neq \emptyset$. Moreover, the set $E$ has null capacity so it is also the case for the set $F$. Thus, the set $F$ cuts $\overline{T'}$ from $\hyp(nA+h(n)\vv)$ and has null capacity on the event $\cE'_n$.
\begin{figure}[H]
\def\svgwidth{1.2\textwidth}
\begin{center}
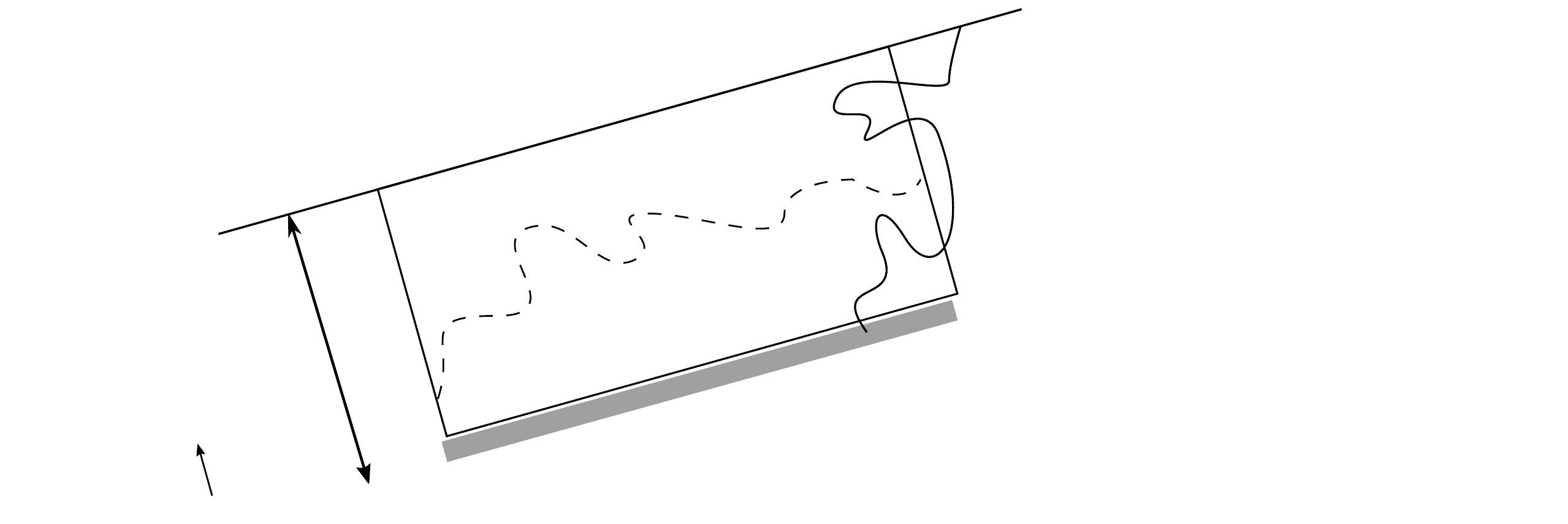
\caption[fig4]{\label{fig4}Construction of a cutset from $\overline{T'}$ to $\hyp(nA+h(n)\vv)$ from a cutset from the top to the bottom in the cylinder $\cyl(nA,h(n))$ on the event $\cE'_n$}
\end{center}
\end{figure}
Finally, for a constant $C'_d$ depending on $d$,

\begin{align}\label{ineqret}
&\frac{\E[\chi_G(T',h(n)-\overrightarrow{u}\cdot \vv,\vv)]}{\cH^{d-1}(nA)}\nonumber\\
&\hspace{0.8cm}\leq \frac{\E[\chi_G(T',h(n)-\overrightarrow{u}\cdot \vv,\vv)\mathds{1}_{\cE'_n}]}{\cH^{d-1}(nA)}+\frac{\E[\chi_G(T',h(n)-\overrightarrow{u}\cdot \vv,\vv)\mathds{1}_{\cE_n^{'c}}]}{\cH^{d-1}(nA)}\nonumber\\
&\hspace{0.8cm}\leq \frac{\E[|F|]}{\cH^{d-1}(nA)}+\frac{\E[\sum_{x\in \overline{T'}\cap\sZ^d}|\partial_e(\sC_{G,0}(x))|\mathds{1}_{\cE_n^{'c}}]}{\cH^{d-1}(nA)}\nonumber\\
&\hspace{0.8cm}\leq \frac{\E[\psi_G(nA,h(n),\vv)]+C'_d h(n)\cH^{d-2}(\partial(nA))\E[|\partial_e(\sC_{G,0}(0))|]}{\cH^{d-1}(nA)}\nonumber\\
&\hspace{1.4cm}+\frac{C'_d\cH^{d-1}(T')\sqrt{\E[|\partial_e(\sC_{G,0}(0))|^2]C_d h(n)\cH^{d-1}(nA)\kappa_1\exp(-\kappa_2 h(n))}}{\cH^{d-1}(nA)}\nonumber\\
&\hspace{0.8cm}\leq \frac{\E[\psi_G(nA,h(n),\vv)]}{\cH^{d-1}(nA)}+\frac{C'_d h(n)\cH^{d-2}(\partial(nA))\E[|\partial_e(\sC_{G,0}(0))|]}{\cH^{d-1}(nA)}\nonumber\\
&\hspace{1.4cm}+C''_d\sqrt{\cH^{d-1}(nA)\E[|\partial_e(\sC_{G,0}(0))|^2]h(n)\kappa_1\exp(-\kappa_2 h(n))}
\end{align}
where we use in the second inequality the control of $\chi_G(T',h(n)-\overrightarrow{u}\cdot \vv,\vv)$ obtained in Lemma \ref{lem1} and Cauchy-Scwartz' inequality in the third inequality.

As $\cH^{d-1}(nA)$ is of order $n^{d-1}$, $\cH^{d-2}(\partial(nA))$ is of order $n^{d-2}$  and $h$ satisfies condition $(\star)$, the second and the third terms of the right hand side of the inequality \eqref{ineqret} go to $0$ as $n$ goes to infinity. Moreover, thanks to Proposition \ref{subb}, using the  invariance of the model by the translation by an integer vector and the fact that the limit $\zeta_{G(\{0\})}(\vv)$ does not depend on the height function,
\begin{align*}
\lim_{n\rightarrow\infty}\frac{\E[\chi_G(T',h(n)-\overrightarrow{u}\cdot \vv,\vv)]}{\cH^{d-1}(T')}&=\lim_{n\rightarrow\infty}\frac{\E[\chi_G(nA,h(n)-\overrightarrow{u}\cdot \vv,\vv)]}{\cH^{d-1}(T')}\\
&=\lim_{n\rightarrow\infty}\frac{\E[\chi_G(nA,h(n),\vv)]}{\cH^{d-1}(nA)}\\
&=\zeta_{G(\{0\})}(\vv)\,.
\end{align*}
Thus, we obtain from \eqref{ineqret}
\begin{align}\label{liminf}
\zeta_{G(\{0\})}(\vv)=\lim_{n\rightarrow\infty}\frac{\E[\chi_G(nA,h(n),\vv)]}{\cH^{d-1}(nA)}\leq \liminf_{n\rightarrow\infty} \frac{\E[\psi_G(nA,h(n),\vv)]}{\cH^{d-1}(nA)}\,.
\end{align}
Combining inequalities \eqref{limsup} and \eqref{liminf}, we get that
\begin{align*}
\lim_{n\rightarrow\infty}\frac{\E[\chi_G(nA,h(n),\vv)]}{\cH^{d-1}(nA)}=\lim_{n\rightarrow\infty}\frac{\E[\psi_G(nA,h(n),\vv)]}{\cH^{d-1}(nA)}=\zeta_{G(\{0\})}(\vv)\,.
\end{align*}

\end{proof}

\section{Concentration}\label{s6}
 We aim here to prove Theorem \ref{heart}. To prove this theorem, we will need Proposition \ref{egalite} and the concentration inequality stated in Proposition \ref{ESI} that gives a control of the variance of a function of a finite family of independent random variables. We are not going to apply Proposition \ref{ESI} directly to $\psi_G$ for general distributions $G$ on $[0,+\infty]$ because Proposition \ref{ESI} does not apply \textit{a priori} for random variables than can take infinite value. Although we believe that this proposition may be adapted to distributions on $[0,+\infty]$, we prefer here to apply this proposition to $\psi_{G'}$ for simpler distributions $G'=G(\{0\})\delta_0 +(1-G(\{0\}))\delta_1$. The idea behind is that we are interested only in edges of null passage times because when the cylinder is big enough there exists almost surely a null cutset and so $\psi_G$ and $\psi_G'$ properly renormalized converges towards the same almost sure limit. More precisely, we need to show the following Lemma to prove Theorem \ref{heart}.
\begin{lem}\label{lem19}Let $p<p_c(d)$, we define $G_p=p\delta_1+ (1-p)\delta_0$. For every function $h$ satisfying condition $(\star)$, for every $\vv\in\sS^{d-1}$, for every non-degenerate hyperrectangle $A$ normal to $\vv$,  
$$\lim_{n\rightarrow \infty}\frac{\psi_{G_p}(nA,h(n),\vv)}{\cH^{d-1}(nA)}=\zeta_{1-p}(\vv) \text{ a.s..}$$\end{lem}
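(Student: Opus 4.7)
The plan combines the Efron--Stein inequality (Proposition \ref{ESI}) with Proposition \ref{egalite} and a Borel--Cantelli argument along a subsequence. Since $G_p$ takes only the values $0$ and $1$ and $\psi_{G_p}(nA,h(n),\vv)$ depends only on the finite family $(t_{G_p}(e))_{e\in\E^d\cap\cyl(nA,h(n))}$ of i.i.d.\ Bernoulli variables, Proposition \ref{ESI} applies directly.

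The key input is a bound on the one-edge Efron--Stein increment. Let $e=\langle x,y\rangle$ and compare the original configuration to the one obtained by resampling $t(e)$. Flipping $t(e)$ from $1$ to $0$ only enlarges the collection of null cutsets, so $\psi_{G_p}$ weakly decreases; only the opposite flip contributes to the negative part. Starting from an optimal null cutset $E$ of the original configuration, if $e\in E$ I replace $e$ by the exterior edge boundary $\partial_e\sC^{(e)}_{G_p,0}(x)$ of the 1-cluster of $x$ in the resampled configuration. By the very definition of the boundary this set consists of 0-edges, so the patched set is still of null capacity; moreover any top-to-bottom path whose unique intersection with $E$ is $e$ necessarily visits $\sC^{(e)}_{G_p,0}(x)$ and exits through $\partial_e\sC^{(e)}_{G_p,0}(x)$, so the patched set is still a cutset. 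The one-edge increment is therefore bounded by $c_d\,\card_v\!\bigl(\sC^{(e)}_{G_p,0}(x)\bigr)$. Since $1-p<p_c(d)$, Theorem \ref{controlcluster} gives finite second moment of the cluster size, and summing the Efron--Stein terms over the $O(n^{d-1}h(n))$ edges of the cylinder yields
$$\Var\!\bigl(\psi_{G_p}(nA,h(n),\vv)\bigr)\leq C\,n^{d-1}h(n)\,.$$

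Combining this with Proposition \ref{egalite} and Chebyshev's inequality, for every $\ep>0$,
$$\Prb\!\left[\left|\frac{\psi_{G_p}(nA,h(n),\vv)}{\cH^{d-1}(nA)}-\zeta_{1-p}(\vv)\right|>\ep\right]\leq \frac{C\,h(n)}{\ep^2\,n^{d-1}}+o(1)\,,$$
which tends to $0$ since $d\geq 3$ and $h(n)=o(n)$, establishing convergence in probability. To obtain almost sure convergence I choose a polynomial subsequence $n_k=\lfloor k^\alpha\rfloor$ with $\alpha>1/(d-2)$, feasible precisely because $d\geq 3$; the bound $h(n_k)/n_k^{d-1}\leq n_k^{2-d}\leq k^{-\alpha(d-2)}$ is then summable, so Borel--Cantelli gives almost sure convergence of $\psi_{G_p}(n_kA,h(n_k),\vv)/\cH^{d-1}(n_kA)$ to $\zeta_{1-p}(\vv)$ along $(n_k)$.

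The main obstacle is extending the convergence from the subsequence $(n_k)$ to the whole sequence, since $\psi_{G_p}$ enjoys no evident monotonicity in $n$. The plan is to keep $\alpha$ small enough (still $>1/(d-2)$) so that $n_{k+1}/n_k\to 1$, and then to compare $\cyl(nA,h(n))$ for $n_k\leq n\leq n_{k+1}$ to its neighbours along the subsequence by the cutset-surgery philosophy of Sections \ref{s4}--\ref{s5}: given an optimal null cutset in one cylinder one builds a null cutset in the other by restriction or extension together with patching, along a thin shell near the vertical sides, by sets of the form $\partial_e\sC_{G_p,0}(z)$ for $z$ in the shell. Theorem \ref{controlcluster} controls the patching cost by $O(h(n)\cH^{d-2}(\partial(nA)))=o(n^{d-1})$, which is negligible after renormalisation by $\cH^{d-1}(nA)$ and therefore transfers the almost sure convergence to the full sequence.
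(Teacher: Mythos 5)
Your overall framework (Efron--Stein $\Rightarrow$ Chebyshev $\Rightarrow$ Borel--Cantelli) is the same as the paper's, and your one-edge patching construction is essentially correct. The genuine gap is in the strength of the variance bound and what it forces you to do afterwards.

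The paper does \emph{not} sum a constant over all $m_n=O(n^{d-1}h(n))$ edges of the cylinder. It makes two refinements. First, on a good event $\cG_n$ it truncates all cluster sizes by $\min(h(n)/4,n^{1/4})$, so that the one-edge increment is deterministically $O(n^{1/4})$ there (outside $\cG_n$ the contribution is killed by the exponential tail of Theorem~\ref{controlcluster}). Second, and crucially, it observes that on $\cG_n$ the increment is nonzero only if $e_i$ belongs to $R_n$, the \emph{intersection} of all optimal cutsets: otherwise some optimal null cutset avoids $e_i$ and still works after resampling, so $f(X^{(i)})\le f(X)$. Since $\card_e(R_n)\le\psi_{G_p}(nA,h(n),\vv)$, the Efron--Stein sum is bounded by $4\sqrt{n}\,\E[\psi_{G_p}(nA,h(n),\vv)]=O(n^{d-1/2})$, plus an exponentially small remainder. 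Chebyshev then gives $O(n^{3/2-d})$, which is summable for every $d\ge3$, so Borel--Cantelli applies directly to the full sequence with no interpolation at all.

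Your bound $\Var=O(n^{d-1}h(n))$, which sums a finite-moment cluster over \emph{every} edge, loses a factor of roughly $h(n)/\sqrt{n}$ compared to the paper. The resulting Chebyshev bound $O(h(n)/n^{d-1})$ is summable only for $d\ge4$; for $d=3$ it can be as large as $1/(n\log n)$ (take $h(n)=n/\log n$, which satisfies $(\star)$), so the full-sequence Borel--Cantelli fails exactly in the borderline dimension. Your fallback is a polynomial subsequence together with a cutset-surgery interpolation, but this is where the proposal is incomplete: the interpolation requires an \emph{almost sure} bound on the patching cost in the annular/shell region simultaneously for all $n\in[n_k,n_{k+1}]$, whereas Theorem~\ref{controlcluster} only yields an in-expectation or per-$n$ probabilistic bound; making this rigorous would require a separate Borel--Cantelli estimate on the maximal cluster size in the shell, a precise nesting argument for cylinders whose widths and (possibly non-monotone) heights both vary, and a proof that the surgery really produces a cutset. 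None of this is spelled out. A secondary, easily repairable issue: your patching step implicitly assumes the relevant cluster does not reach the top or bottom of the cylinder (otherwise the patched set need not be a cutset), which is exactly what the paper's event $\cG_n$ is there to guarantee. I would recommend adopting the paper's two refinements ($R_n$ and the $n^{1/4}$ truncation) rather than trying to complete the interpolation route.
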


\begin{rk}
The advantage of using a concentration inequality on $\psi_G$ rather than on $\chi_G$ is that $\psi_G$ depends on the capacity of a finite deterministic set of edges whereas $\chi_G$ depends on an infinite set of edges (the edges in $\slab(A,\infty,\vv)$). Therefore $\psi_G$ is more appropriate to apply this concentration inequality.
\end{rk}

\begin{proof}[Proof of Lemma \ref{lem19}]
 Let $p<p_c(d)$. Let $\vv\in\sS^{d-1}$. Let $A$ be a non-degenerate hyperrectangle normal to $\vv$ and $h$ an height function that satisfies condition $(\star)$.
 We consider the cylinder $\cyl(nA,h(n))$ and we enumerate its edges as $e_1,\dots, e_{m_n}$. We define $(t_{G_p}(e_1),\dots, t_{G_p}(e_{m_n}),t'_{G_p}(e_1),\dots, t'_{G_p}(e_{m_n}))$ a family of independent random variables distributed according to distribution $G_p$. The quantity $\psi_{G_p}(nA,h(n),\vv)$ is a random variable that depends only on the capacities of the edges $e_1,\dots,e_{m_n}$. We define  $X=(t_{G_p}(e_1),\dots, t_{G_p}(e_{m_n}))$, $X^{(i)}=(t_{G_p}(e_1),\dots,t'_{G_p}(e_i),\dots, t_{G_p}(e_{m_n})$ for $i\in\{1,\dots,m_n\}$ and $f$ the function defined by $\psi_{G_p}(nA,h(n),\vv)=f(X)$. We denote by $\cG_n$ the following event that depends on $t_{G_p}(e_1),\dots,t_{G_p}(e_{m_n})$,
 $$\cG_n=\bigcap_{x\in\cyl(nA,h(n))\cap\sZ^d}\left\{\card_v(\sC_{G,0}(x))\leq \min\left(\frac{h(n)}{4},n^{1/4}\right)\right\}\,.$$
 On the event $\cG_n$, the minimal capacity of a cutset from the top to the bottom of the cylinder is null. 
Let $i\in\{1,\dots,m_n\}$, let us assume that $f(X)<f(X^{(i)})$. If $t_{G_p}'(e_i)<t_{G_p}(e_i)$, then we have $t_{G_p}(e_i)=1$, $t_{G_p}'(e_i)=0$ and on the event $\cG_n$, there exists a cutset of null capacity $E$ (thus $E$ does not contain $e_i$) that achieves the infimum in $f(X)$. It is still a cutset of null capacity in $\cyl(nA,h(n))$ for the distribution $X^{(i)}$. Thus, we obtain the following contradiction $f(X^{(i)})\leq |E|=f(X)$, so if $f(X)<f(X^{(i)})$ then $t_{G_p}'(e_i)\geq t_{G_p}(e_i)$ on the event $\cG_n$. 

Let us now assume that $f(X)<f(X^{(i)})$, then we have $t_{G_p}(e_i)\leq t_{G_p}'(e_i)$ on the event $\cG_n$. Let us denote by $R_n$ the intersection of all the minimal cutsets that achieve the infimum in $\psi_{G_p}(nA,h(n),\vv)$. If $e_i\notin R_n$, then there exists a cutset $E$ that does not contain $e_i$ and that achieves the infimum in $\psi_{G_p}(nA,h(n),\vv)$, \textit{i.e.}, $f(X)=|E|$. On the event $\cG_n$, all the cutsets that achieve the infimum in $f(X)$ are of null capacity.  Since $E$ is a cutset of null capacity that does not contain the edge $e_i$, it is still a cutset of null capacity in $\cyl(nA,h(n))$ for the distribution $X^{(i)}$. Thus, $f(X^{(i)})\leq f(X)$, which is a contradiction. 
Thus on $\cG_n$, if $f(X)<f(X^{(i)})$ then $t_{G_p}(e_i)\leq t'_{G_p}(e_i)$ and $e_i\in R_n$. We denote by $E$ a cutset that achieves the infimum in $f(X)$. We have $e_i\in E$, let us define $\widetilde{E}=(E\cup \partial_e\sC_{G,0}(e_i^+)\cup \partial_e\sC_{G,0}(e_i^-))\setminus\{e_i\}$ where we write $e_i=\langle e_i^-,e_i^+\rangle$. This set has null capacity for both distributions $X$ and $X^{(i)}$. Let us prove that on the event $\cG_n$, the set $\widetilde{E}$ cuts the top from the bottom of cylinder $\cyl(nA,h(n))$.
 
 Let $\gamma$ be a path from $x\in T(nA,h(n))$ to $y\in B(nA,h(n))$. If $e_i\notin \gamma$ then as $E$ is a cutset, we have that $\gamma\cap E\setminus\{e_i\} \neq \emptyset$ thus $\gamma\cap \widetilde{E}\neq \emptyset$. We now assume that  $e_i\in \gamma$. On the event $\cG_n$, $\gamma$ cannot be included in $\sC_{G,0}(e_i^+)\cup \sC_{G,0}(e_i^-)$. Thus, either $x\notin \sC_{G,0}(e_i^+)\cup \sC_{G,0}(e_i^-)$ or $y\notin \sC_{G,0}(e_i^+)\cup \sC_{G,0}(e_i^-)$. We study only the case $y\notin \sC_{G,0}(e_i^+)\cup \sC_{G,0}(e_i^-)$, the other case is studied similarly. We denote by $f$ the edge $\gamma$ takes to finally exit $\sC_{G,0}(e_i^+)\cup \sC_{G,0}(e_i^-)$, \textit{i.e.}, if we write $\gamma=(v_0,e'_1,v_1,\dots,e'_m,v_m)$ and we denote by $p=\max\{j\, : \, v_j\in \sC_{G,0}(e_i^+)\cup \sC_{G,0}(e_i^-)\}$ then $f=e'_{p+1}$. By definition of $p$, we must have $f\neq e_i$ and $f\in\partial_e\sC_{G,0}(e_i^+)\cup \partial_e\sC_{G,0}(e_i^-)\setminus\{e_i\}$. As $f\in\widetilde{E}$, we finally obtain that $\gamma\cap\widetilde{E}\neq \emptyset$ and that on the event $\cG_n$, $\widetilde{E}$ is indeed a cutset in the cylinder of null capacity for the distribution $X^{(i)}$. 
 
Thus on the event $\cG_n$ and when $f(X)<f(X^{(i)})$, we have that $f(X^{(i)})\leq |\widetilde{E}|$ and for large $n$
\begin{align*}
 f(X^{(i)})-f(X)&\leq \card_e(\sC_{G,0}(e_i^+)\cup \sC_{G,0}(e_i^-))\\
 &\leq 2\min\left(\frac{h(n)}{4},n^{1/4}\right)\\
 &\leq 2 n^{1/4} \,.
 \end{align*}
We can now apply Proposition \ref{ESI} to obtain
 \begin{align}\label{eqESI}
 \Var(f(X))&\leq \sum_{i=1}^{m_n}\E\left[(f(X)-f(X^{(i)}))_-^2\right]\nonumber\\
 &\leq \sum_{i=1}^{m_n}\E\left[(f(X)-f(X^{(i)}))_-^2\ind_{\cG_n}\right]+\E\left[(f(X)-f(X^{(i)}))_-^2\ind_{\cG_n^c}\right]\nonumber\\
 &\leq \sum_{i=1}^{m_n}\E\left[(f(X)-f(X^{(i)}))_-^2\ind_{\cG_n,\,e_i\in R_n}\right]\nonumber\\
 &\hspace{1.5cm}+\card_e(\cyl(nA,h(n))\cap \E ^d)^2\Prb[\cG_n^c]\nonumber\\ 
 &\leq \sum_{i=1}^{m_n}\E\left[4\sqrt{n}\ind_{e_i\in R_n}\right]+\card_e(\cyl(nA,h(n))\cap \E^d)^2\Prb[\cG_n^c]\nonumber\\
  &\leq4\sqrt{n}\E\left[card_e(R_n)\right]+\card_e(\cyl(nA,h(n))\cap\E^d)^2 \nonumber\\
  &\hspace{1.5cm}\times \card_v(\cyl(nA,h(n))\cap \sZ^d) \kappa_1\exp(-\kappa_2 \min(h(n)/4,n^{1/4}))\nonumber\\
   &\leq 4\sqrt{n}\E\left[\psi_{G_p}(nA,h(n),\vv)\right]\nonumber\\
   &\hspace{1.5cm}+\card_e(\cyl(nA,h(n))\cap \E ^d)^3\kappa_1\exp(-\kappa_2 \min(h(n)/4,n^{1/4}))\,.
\end{align} 
Let $\ep>0$, by Bienaymé-Tchebytchev inequality,
\begin{align}\label{eqMarkov}
&\Prb\left[\left|\frac{\psi_{G_p}(nA,h(n),\vv)}{\cH^{d-1}(nA)}-\E\left[\frac{\psi_{G_p}(nA,h(n),\vv)}{\cH^{d-1}(nA)}\right]\right|>\ep\right]\leq \frac{\Var(f(X))}{(\ep\cH^{d-1}(nA))^2}\,.
\end{align}   
As $\cH^{d-1}(nA)$ and $\E\left[\psi_{G_p}(nA,h(n),\vv)\right]$ are both of order $n^{d-1}$, by inequalities \eqref{eqESI} and \eqref{eqMarkov} and since $h$ satisfies $(\star)$, we obtain that 
\begin{align}\label{concentration}
\Prb\left[\left|\frac{\psi_{G_p}(nA,h(n),\vv)}{\cH^{d-1}(nA)}-\E\left[\frac{\psi_{G_p}(nA,h(n),\vv)}{\cH^{d-1}(nA)}\right]\right|>\ep\right]= O\left(\dfrac{1}{n^{d-3/2}}\right)\,.
\end{align} 
We can conclude that for $d\geq 3$, the sum $$\sum_{n=1}^\infty\Prb\left[\left|\frac{\psi_{G_p}(nA,h(n),\vv)}{\cH^{d-1}(nA)}-\E\left[\frac{\psi_{G_p}(nA,h(n),\vv)}{\cH^{d-1}(nA)}\right]\right|>\ep\right]$$ is finite. By Borel-Cantelli Lemma, we deduce that almost surely $$\limsup_{n\rightarrow\infty}\left|\frac{\psi_{G_p}(nA,h(n),\vv)}{\cH^{d-1}(nA)}-\E\left[\frac{\psi_{G_p}(nA,h(n),\vv)}{\cH^{d-1}(nA)}\right]\right|\leq 0\,,$$ and finally,
$$\lim_{n\rightarrow\infty}\frac{\psi_{G_p}(nA,h(n),\vv)}{\cH^{d-1}(nA)}=\zeta_{1-p}(\vv)\, \text{  a.s.}\,.$$
 \end{proof}
 
 \begin{proof}[Proof of Theorem \ref{heart}]
Let $G$ be a distribution on $[0,+\infty]$ such that $G(\{0\})>1-p_c(d)$. We define $p=1-G(\{0\})$, we have $p< p_c(d)$. Let $\vv\in\sS^{d-1}$. Let $A$ be a non-degenerate hyperrectangle normal to $\vv$ and $h$ an height function that satisfies condition $(\star)$. We couple the capacities $(t_G(e))_{e\in\E^d}$ and $(t_{G_p}(e))_{e\in\E^d}$ in the following way: for $e\in \E^d$, $t_{G_p}(e)=\ind_{t_G(e)>0}$. Note that for any $e\in \E^d$, $t_G(e)=0$ if and only if $t_{G_p}(e)=0$. We define the event $\cH_n$,
$$\cH_n=\bigcap_{x\in B(nA,h(n))}\left \{ \card_v(\sC_{G,0}(x))\leq \frac{h(n)}{2}\right\}\,.$$
On the event $\cH_n$, there exists a cutset of null capacity between the top and the bottom of $\cyl(nA,h(n))$, and the cutsets of null capacity that achieve the infimum in $\psi_G(nA,h(n),\vv)$ and $\psi_{G_p}(nA,h(n),\vv)$ are the same. Thus, on the event $\cH_n$,  $\psi_G(nA,h(n),\vv)=\psi_{G_p}(nA,h(n),\vv)$. By Theorem \ref{controlcluster}, there exists a constant $c_d$ depending only on $d$ such that $$\Prb[\cH_n^c]\leq c_d \cH^{d-1}(nA) \kappa_1\exp(-\kappa_2 h(n)/2)\,.$$ As $h$ satisfies condition $(\star)$, we conclude that
$\sum_n \Prb[\cH_n^c]<\infty\, .$
By Borel-Cantelli Lemma and by Lemma \ref{lem19}, we deduce that
$$\lim_{n\rightarrow\infty}\frac{\psi_{G}(nA,h(n),\vv)}{\cH^{d-1}(nA)}=\lim_{n\rightarrow\infty}\frac{\psi_{G_p}(nA,h(n),\vv)}{\cH^{d-1}(nA)}=\zeta_{G(\{0\})}(\vv)\, \text{  a.s.}\,.$$
\end{proof}
 \begin{rk}
Unfortunately at this stage, we are not able to prove this result for $d=2$. The concentration's argument we use in this section fails in dimension $2$ (see \eqref{concentration}).

 \end{rk}

\bibliographystyle{plain}
\bibliography{biblio}

\end{document}